\documentclass{article}
\usepackage{amsmath,amssymb,amsthm,fullpage}
\usepackage{graphicx,color,epsfig,epstopdf}
\usepackage{authblk}
\usepackage{enumerate}

\newcommand{\DEF}{\sl}
\newcommand{\STAB}{\mathop{\mathrm{STAB}}}

\newcommand{\conv}{\mathop{\mathrm{conv}}}
\newcommand{\rk}{\mathop{\mathrm{rank}}} 
\newcommand{\nnegrk}{\mathop{\mathrm{rank}_+}} 
\newcommand{\psdrk}{\mathop{\mathrm{rank}}\nolimits_{\mathrm{PSD}}} 
\newcommand{\supp}{\mathop{\mathrm{supp}}} 
\newcommand{\suppmat}{\mathop{\mathrm{suppmat}}} 
\newcommand{\vertexset}{\mathop{\mathrm{vert}}}
\newcommand{\RR}{\mathbb{R}}

\newcommand{\lfactor}{T}
\newcommand{\rfactor}{U}

\newcommand{\Tr}{\mathop{\mathrm{Tr}}}
\newcommand {\tr}[1]{\Tr\left[#1\right]}
\newcommand {\mbS}{\mathbb{S}}

\newcommand{\diag}{\mathop{\mathrm{diag}}}
\newcommand{\xc}{\mathop{\mathrm{xc}}}
\newcommand{\xcp}{\mathop{\mathrm{xc}}\nolimits_{\mathrm{SDP}}}
\newcommand{\TSP}{\mathop{\mathrm{TSP}}}
\newcommand{\CUT}{\mathop{\mathrm{CUT}}}
\newcommand{\COR}{\mathop{\mathrm{COR}}}

\newcommand{\ketbra}[2]{|#1\rangle\langle#2|}

\newcommand{\ket}[1]{|#1 \rangle}
\newcommand{\bra}[1]{\langle #1|}
\def\01{\{0,1\}}

\newtheorem{thm}{Theorem}
\newtheorem{lem}[thm]{Lemma}

\newtheorem{cor}[thm]{Corollary}

\newtheorem{remark}{Remark}


\title{Exponential Lower Bounds for Polytopes in Combinatorial Optimization}

\author[1]{Samuel Fiorini\thanks{Partially supported by the \emph{Actions de Recherche Concert\'ees} (ARC) fund of the French community of Belgium.}}
\author[3]{Serge Massar\thanks{Partially supported 
by the European Commission under the project QCS (Grant No.\ 255961).}}
\author[2]{Sebastian Pokutta}
\author[1]{Hans Raj Tiwary\thanks{Postdoctoral Researcher of the \emph{Fonds National de la Recherche Scientifique} (F.R.S.--FNRS).}}
\author[4]{Ronald de~Wolf\thanks{Partially supported by a Vidi grant from the Netherlands Organization for Scientific Research (NWO), and by the European Commission under the project QCS (Grant No.\ 255961).}}

\affil[1]{\small Department of Mathematics, Universit\'e libre de Bruxelles CP 216, Boulevard du Triomphe, 1050
Brussels, Belgium. \textit{Email: }\{sfiorini, htiwary\}@ulb.ac.be}
\affil[2]{\small Industrial and Systems Engineering (ISyE)
and Algorithms and Randomness Center (ARC), Georgia Institute of Technology, Groseclose 0205, Atlanta, GA 30332, USA. \textit{Email: } sebastian.pokutta@isye.gatech.edu}
\affil[3]{\small Laboratoire d'Information Quantique, Universit\'e libre de Bruxelles CP 225, Boulevard du Triomphe, 1050
Brussels, Belgium. \textit{Email: }smassar@ulb.ac.be}
\affil[4]{\small CWI and University of Amsterdam, Amsterdam, The Netherlands. \textit{Email: }rdewolf@cwi.nl}


\begin{document}

\maketitle

\begin{abstract}
We solve a 20-year old problem posed by Yannakakis and prove that there exists no polynomial-size linear program (LP) whose associated polytope projects to the traveling salesman polytope, even if the LP is not required to be symmetric. Moreover, we prove that this holds also for the cut polytope and the stable set polytope. These results were discovered through a new connection that we make between one-way quantum communication protocols and semidefinite programming reformulations of LPs.
\end{abstract}


\section{Introduction}

Since the advent of the simplex method~\cite{Dantzig1947}, linear programming has become a prominent tool for solving optimization problems in practice. On the theoretical side, LPs can be solved in polynomial time via either the ellipsoid method~\cite{Khachiyan1979} or interior point methods~\cite{Karmakar1984}. 

In 1986--1987 there were attempts~\cite{Swart86} to prove P $=$ NP by giving a polynomial-size LP that would solve the traveling salesman problem (TSP). Due to the large size and complicated structure of the proposed LP for the TSP, it was difficult to show directly that the LP was erroneous. In a groundbreaking effort to refute all such attempts, Yannakakis \cite{Yannakakis1988} proved that every symmetric LP for the TSP has exponential size (see~\cite{Yannakakis91} for the journal version). Here, an LP is called \emph{symmetric} if every permutation of the cities can be extended to a permutation of all the variables of the LP that preserves the constraints of the LP. Because the proposed LP for the TSP was symmetric, it could not possibly be correct.

In his paper, Yannakakis left as a main open problem the question of proving that the TSP admits no polynomial-size LP, \emph{symmetric or not}. We solve this question by proving a super-polynomial lower bound on the number of inequalities in \emph{every} LP for the TSP. We also prove such unconditional super-polynomial lower bounds for the maximum cut and maximum stable set problems. Therefore, it is impossible to prove P = NP by means of a polynomial-size LP that expresses any of these problems. Our approach is inspired by a close connection between semidefinite programming reformulations of LPs and one-way quantum communication protocols that we introduce here.

\subsection{State of the Art}

%

\paragraph{From Problems to Polytopes}

For combinatorial optimization problems such as the TSP, the feasible solutions can be encoded as points in a set $X \subseteq \{0,1\}^d$ in such a way that solving an instance of the problem amounts to optimizing a linear objective function over $X$, with coefficients given by the instance. By taking the convex hull of $X$, one obtains a polytope $P := \conv{X}$ (see Appendix~\ref{apx:background} for background on polytopes). Optimizing any linear function $f(x)$ over $X$ is equivalent to optimizing this function $f(x)$ over $P = \conv{X}$. 

For example, for the TSP we have a set $X \subseteq \{0,1\}^{n \choose 2}$ of 0/1-points that correspond to a Hamiltonian cycle in the complete $n$-vertex graph $K_n$. The convex hull of these points is the \emph{TSP polytope} $\TSP(n) = \conv{X}$. An instance of the TSP is given by the set of edge-weights $w_{ij}$. Solving this instance amounts to minimizing $f(x) := \sum_{i<j}w_{ij}x_{ij}$ over all $x \in \TSP(n)$.  This minimum is attained at a vertex of the polytope, i.e., at a point $x \in X$.

The idea of representing the set of feasible solutions of a problem by a polytope forms the basis of a standard and powerful methodology in combinatorial optimization, see, e.g., \cite{SchrijverBook}. 

\paragraph{Extended Formulations and Extensions}

Resuming the discussion above (and assuming that the problem is a minimization problem), we have $\min \{f(x) \mid x \in X\} = \min \{f(x) \mid x \in P\} = \min \{f(x) \mid Ax \leqslant b\}$, where $Ax \leqslant b$ is any linear description of $P$. This turns any given instance of the combinatorial optimization problem into an LP, however over an implicit system of constraints the LP is potentially large since it has at least one inequality per facet of $P$. In fact, even for polynomially solvable problems, the associated polytope $P$ may have an exponential number of facets. 

By working in an extended space, that is, considering extra variables $y \in \RR^k$ besides the original variables $x \in \RR^d$, it is often possible to decrease the number of constraints. In some cases, a polynomial increase in dimension can be traded for an exponential decrease in the number of constraints. This is the idea underlying extended formulations. 

Formally, an \emph{extended formulation} (EF) of a polytope $P \subseteq \RR^d$ is a linear system
\begin{equation} \label{eq:EF}
E^{=}x + F^{=}y = g^{=}, \ E^{\leqslant}x + F^{\leqslant}y \leqslant g^{\leqslant}
\end{equation}
in variables $(x,y) \in \RR^{d+r},$ where $E^{=}, F^{=}, E^{\leqslant}, F^{\leqslant}$ are real matrices with $d, k, d, k$ columns respectively, and $g^{=}, g^{\leqslant}$ are column vectors, such that $x \in P$ if and only if there exists $y$ such that \eqref{eq:EF} holds. 

The \emph{size} of an EF is defined as the number of \emph{inequalities} in the system. Another possible definition of size would be the sum of the number of variables and total number of constraints (equalities plus inequalities) defining the EF. This would make little difference because if a polytope $P \subseteq \mathbb{R}^d$ has an EF with $r$ inequalities, then it has an EF with $d+r$ variables, $r$ inequalities and at most $d+r$ equalities (see Remark \ref{rem:factorization} for a proof). If we assume that $P$ is full-dimensional (otherwise one may cheat and make $d$ artificially high) 
then $d \leqslant r$ and thus the two measures of size are within a constant of each other.

Notice that optimizing any (not necessarily linear) objective function $f(x)$ over all $x \in P$ amounts to optimizing $f(x)$ over all $(x,y) \in \RR^{d+r}$ satisfying \eqref{eq:EF}, provided \eqref{eq:EF} defines an EF of $P$.

Here, we often restrict to EFs in slack form, that is, containing only equalities and one nonnegativity inequality per additional variable:
\begin{equation} \label{eq:EF-slack-form}
E x + F y = g,\ y \geqslant \mathbf{0}.
\end{equation}
The proof of the factorization theorem (Theorem~\ref{thm:factorization}) shows that this can be done without loss of generality, see Remark~\ref{rem:factorization}. In the following we put EFs in slack form to ease the generalization to arbitrary cones. Notice that the size of an EF in slack form can equivalently be defined as the number of additional variables since the only inequalities are from $y \geqslant \mathbf{0}$.
 
An \emph{extension} of the polytope $P$ is another polytope 
$Q \subseteq \mathbb{R}^e$ such that $P$ is the image of $Q$ under a linear map. We define the \emph{size} of an extension $Q$ as the number of facets of $Q$. If $P$ has an extension of size $r$, then it has an EF of size $r$. Conversely, it is known that if $P$ has an EF of size $r$, then it has an extension of size at most~$r$ (see Theorem~\ref{thm:factorization}). In this sense, the concepts of EF and extension are equivalent.

\paragraph{The Impact of Extended Formulations}

EFs have pervaded the areas of discrete optimization and approximation algorithms for a long time. For instance, Balas's disjunctive programming~\cite{Balas1985}, the Sherali-Adams hierarchy~\cite{SheraliAdams1990}, the Lov\'asz-Schrijver closures~\cite{LovaszSchrijver1991}, lift-and-project~\cite{BalasCeriaCornuejols1993}, and configuration LPs are all based on the idea of working in an extended space. Recent surveys on EFs in the context of combinatorial optimization and integer programming are 
\cite{ConfortiCornuejolsZambelli10,VanderbeckWolsey2010,Kaibel11,Wolsey11}.

\paragraph{Symmetry Matters}

Yannakakis \cite{Yannakakis91} proved a $2^{\Omega(n)}$ lower bound on the size of any \emph{symmetric} EF of the TSP polytope $\TSP(n)$ (defined above and in Section~\ref{sec:TSP_polytopes}). Although he remarked that he did ``not think that asymmetry helps much'', it was recently shown by Kaibel et al. \cite{KaibelPashkovichTheis10} (see also \cite{Pashkovich09}) that symmetry is a restriction in the sense that there exist polytopes that have polynomial-size EFs but no polynomial-size symmetric EF. This revived Yannakakis's tantalizing question about unconditional lower bounds. That is, bounds which apply to the \emph{extension complexity} of a polytope $P$, defined as the minimum size of an EF of $P$ (irrespective of any symmetry assumption).

\paragraph{0/1-Polytopes with Large Extension Complexity}

The strongest unconditional lower bounds so far were obtained by Rothvo\ss \cite{Rothvoss11}. By an elegant counting argument inspired by Shannon's theorem~\cite{Shannon49}, it was proved that there exist $0/1$-polytopes in $\mathbb{R}^d$ whose extension complexity is at least $2^{d/2-o(d)}$. However, Rothvo\ss's counting technique does not provide \emph{explicit} 0/1-polytopes with an exponential extension complexity.

\paragraph{The Factorization Theorem}

Yannakakis \cite{Yannakakis91} discovered that the extension complexity of a polytope $P$ is determined by certain factorizations of an associated matrix, called the \emph{slack matrix} of $P$, that records for each pair $(F,v)$ of a facet $F$ and vertex $v$, the algebraic distance of $v$ to a valid hyperplane supporting $F$. Defining the \emph{nonnegative rank} of a matrix $M$ as the smallest natural number $r$ such that $M$ can be expressed as $M = TU$ where $T$ and $U$ are nonnegative matrices (i.e., matrices whose elements are all nonnegative) with $r$ columns (in case of \(T\)) and $r$ rows (in case of \(U\)), respectively, it turns out that the extension complexity of every polytope $P$ is exactly the nonnegative rank of its slack matrix.

We point out that this result generalizes to \emph{any} slack matrix of the polytope, which may contain additional rows corresponding to faces $F$ of $P$ which are not facets and/or additional columns corresponding to points $v$ of $P$ that are not vertices. This fact is used in the proof of our lower bounds on extension complexity, starting with Theorem~\ref{thm:LB_CUT}.

This \emph{factorization theorem} led Yannakakis to explore connections between EFs and communication complexity. Let $S$ denote the slack matrix of the polytope~$P$. He proved that: (i) every deterministic communication protocol of complexity $k$ computing $S$ gives rise to an EF of $P$ of size at most $2^k$; (ii) the nondeterministic communication complexity of the \emph{support matrix} of $S$ (i.e., the binary matrix that has 0-entries exactly where $S$ is~0) yields a lower bound on (the base-2 logarithm\footnote{All logarithms in this paper are in base $2$.} of) the extension complexity of $P$, or more generally, the nondeterministic communication complexity of the support matrix of every nonnegative matrix~$M$ yields a lower bound on (the base-2 logarithm of) the nonnegative rank of~$M$.%
\footnote{The classical nondeterministic communication complexity of a binary communication matrix is defined as $\lceil \log B \rceil$, where $B$ is the minimum number of monochromatic 1-rectangles that cover the matrix, see~\cite{KushilevitzNisan97}. This last quantity is also known as the \emph{rectangle covering bound}. It is easy to see that the rectangle covering bound of the support matrix of any matrix $M$ lower bounds the nonnegative rank of $M$ (see Theorem \ref{thm:nnegrkvsndetcc} below).}

\paragraph{Tighter Communication Complexity Connection}

Faenza et al. \cite{FaenzaFioriniGrappeTiwary11} proved that the \mbox{base-$2$} logarithm of the nonnegative rank of a matrix equals, up to a small additive constant, the minimum complexity of a randomized communication protocol \emph{with nonnegative outputs} that computes the matrix \emph{in expectation}. In particular, every EF of size $r$ can be regarded as such a protocol of complexity $\log r + O(1)$ bits that computes a slack matrix in expectation. 

\paragraph{The Clique vs.\ Stable Set Problem}

When $P$ is the stable set polytope $\STAB(G)$ of a graph $G$ (see Section~\ref{sec:stable_set_polytopes}), the slack matrix of $P$ contains an interesting row-induced 0/1-submatrix that is the communication matrix of the \emph{clique vs.\ stable set problem} (also known as the \emph{clique vs.\ independent set problem}): its rows correspond to the cliques and its columns to the stable sets (or independent sets) of graph $G$. The entry for a clique $K$ and stable set $S$ equals $1 - |K \cap S|$. Yannakakis \cite{Yannakakis91} gave an $O(\log^2 n)$ deterministic protocol for the clique vs.\ stable set problem, where $n$ denotes the number of vertices of $G$. This gives a $2^{O(\log^2 n)} = n^{O(\log n)}$ size EF for $\STAB(G)$ whenever the whole slack matrix is 0/1, that is, whenever $G$ is a perfect graph. 

A notoriously hard open question is to determine the communication complexity (in the deterministic or nondeterministic sense) of the clique vs.\ stable set problem. (For recent results that explain why this question is hard, see \cite{KushilevitzWeinreb2009,KushilevitzWeinreb2009b}.) The best lower bound to this day is due to \cite{HuangSudakov2012}: they obtained a $\frac{6}{5} \log n - O(1)$ lower bound. Furthermore, they state a graph-theoretical conjecture that, if true, would imply a $\Omega(\log^2 n)$ lower bound, and hence settle the communication complexity of the clique vs.\ stable set problem. Moreover it would give a worst-case $n^{\Omega(\log n)}$ lower bound on the extension complexity of stable set polytopes. However, a solution to the Huang-Sudakov conjecture seems far away.

\paragraph{Factorization Theorem for General Cones} Gouveia et al.\cite{GouveiaParriloThomas2011} generalized Yannakakis's factorization theorem to other convex cones. There, the question is to know which polytopes $P \subseteq \RR^d$ can be described via a \emph{conic extended formulation}
\begin{equation} \label{eq:conic-EF}
E x + F y = g,\ y \in C
\end{equation}
for some given closed, convex cone $C \subseteq \RR^k$. Cone $C$ is said to be nice if $C^* + F^\perp$ is closed for all faces $F$ of $C$, where $C^*$ is the \emph{dual cone} of $C$. It is known that the nonnegative orthants and the PSD cones are nice. Gouveia et al. \cite{GouveiaParriloThomas2011} prove that, in case $C$ is nice and $P$ has dimension at least~$1$, such a conic EF exists if and only if the slack matrix $S$ of $P$ admits a factorization $S = TU$ where (the transpose of) each row of $T$ is in $C^*$ and each column of $U$ is in $C$. This implies the following factorization theorem for \emph{semidefinite EFs}: the \emph{semidefinite extension complexity} of every polytope $P$ equals the \emph{PSD rank} of its slack matrix $S$ (see Theorem~\ref{thm:factorToExt}).

\subsection{Our Contribution} 

Our contribution in this paper is two-fold.

\begin{itemize}
\item 
First, inspired by earlier work~\cite{Wolf03}, we define a $2^n \times 2^n$ matrix $M=M(n)$ and show that the nonnegative rank of $M$ is $2^{\Omega(n)}$ because the nondeterministic communication complexity of its support matrix is $\Omega(n)$. The latter was proved by de Wolf \cite{Wolf03} using the well-known disjointness lower bound of Razborov \cite{Razborov92}. We use the matrix $M$ to prove a $2^{\Omega(n)}$ lower bound on the extension complexity of the cut polytope $\CUT(n)$ (Section~\ref{sec:cut_polytopes}). That is, we prove that \emph{every} EF of the cut polytope has an exponential number of inequalities. Via reductions, we infer from this: (i) an infinite family of graphs $G$ such that the extension complexity of the corresponding stable set polytope $\STAB(G)$ is $2^{\Omega(\sqrt{n})}$, where $n$ denotes the number of vertices of $G$ (Section~\ref{sec:stable_set_polytopes}); (ii) that the extension complexity of the TSP polytope $\TSP(n)$ is $2^{\Omega(\sqrt{n})}$ (Section~\ref{sec:TSP_polytopes}). 

In addition to simultaneously settling the above-mentioned open problems of Yannakakis \cite{Yannakakis91} and Rothvo\ss \cite{Rothvoss11}, our results provide a lower bound on the extension complexity of stable set polytopes that goes much beyond what is implied by the Huang-Sudakov conjecture (thanks to the fact that we consider a different part of the slack matrix). Although our lower bounds are strong, unconditional and apply to explicit polytopes that are well-known in combinatorial optimization, they have very accessible proofs. 

\item 
Second, we generalize the tight connection between linear\footnote{In this paragraph, and later in Section \ref{sec:qcp}, an EF (in the sense of the previous section) is called a \emph{linear} EF. The use of adjectives such as ``linear'', ``semidefinite'' or ``conic'' will help distinguishing the different types of EFs.} EFs and classical communication complexity found by Faenza et al. \cite{FaenzaFioriniGrappeTiwary11} to a tight connection between semidefinite EFs and quantum communication complexity.\footnote{After a first version of this paper appeared, Jain et al. \cite[Theorem~2]{JSWZ2012} have used this notion of PSD rank to characterize the number of qubits of communication between Alice and Bob needed to generate a shared probability distribution.} We show that any \emph{rank-$r$ PSD factorization} of a (nonnegative) matrix $M$ gives rise to a one-way quantum protocol computing $M$ in expectation that uses \(\log r + O(1)\) qubits and, conversely, that any one-way quantum protocol computing $M$ in 
expectation that uses \(q\) qubits results in a PSD factorization of $M$ of rank \(2^q\). Via the semidefinite factorization theorem, this yields a characterization of the semidefinite extension complexity of a polytope in terms of the minimum complexity of (one-way) quantum protocols that compute the corresponding slack matrix in expectation.\smallskip

Then, we give a complexity~$\log r + O(1)$ quantum protocol for computing a nonnegative matrix $M$ in expectation, whenever there exists a rank-$r$ matrix $N$ such that $M$ is the entry-wise square of $N$. This implies in particular that every $d$-dimensional polytope with 0/1 slacks has a semidefinite EF of size $O(d)$.\smallskip

Finally, we obtain an exponential separation between classical and quantum protocols that compute our specific matrix $M = M(n)$ 
in expectation. On the one hand, our quantum protocol gives a rank-$O(n)$ PSD factorization of $M$. On the other hand, the nonnegative rank of $M$ is $2^{\Omega(n)}$ because the nondeterministic communication complexity of the support matrix of $M$ is $\Omega(n)$.  Thus we also obtain an exponential separation between PSD rank and nonnegative rank.

\end{itemize}

We would like to point out that the lower bounds on the extension complexity of polytopes established in Section~\ref{sec:Strong_LBs} were obtained by first finding an efficient PSD factorization or, equivalently, an efficient one-way quantum communication protocol for the matrix $M = M(n)$. In this sense our classical lower bounds stem from quantum considerations somewhat similar in style to \cite{KerenidisWolf03,Aaronson06,AharonovRegev04}.  See \cite{DruckerWolf11} for a survey of this line of work.

We would also like to point out that the fact that a matrix $M$ with a rank-$r$ entrywise square-root has a PSD-rank at most $r + O(1)$, which follows from Theorem~\ref{th:qupperlowrank}, was also obtained by Gouveia, Parrilo and Thomas~\cite{GouveiaParriloThomas2011}, independently (since their results were not publicly available at the time we performed our research) and in a different context. Also, after a preprint of our paper had appeared, we learned that Klauck et al. {lee:communication} had independently found a matrix (similar but not quite the same as ours) with an exponential separation between PSD rank and nonnegative rank.

\subsection{Other Related and Subsequent Work}

Yannakakis's paper has deeply influenced the TCS community. In addition to the works cited above, it has inspired a whole series of papers on the quality of restricted \emph{approximate} EFs, such as those defined by the Sherali-Adams hierarchies and Lov\'asz-Schrijver closures starting with~\cite{AroraBollobasLovasz02} (\cite{ABLT2006} for the journal version), see, e.g., \cite{BGHMT2006,STT2007,FernandezdelaVegaMathieu2007,CMM2009,GMT2009,GMPT2010,BenabbasMagen2010}. 

After the conference version of our paper appeared, there has been a lot of follow-up work, including on approximations.  Braun et al. \cite{bfps2012} developed a general framework for studying the power of approximate EFs, independent of specific hierarchies. In particular, via lower bounds on the extension complexity of approximations of the cut polytope, they showed that linear programs for approximating Max-Clique to within a factor $n^{1/2-\epsilon}$ need size at least $2^{\Omega(n^{\epsilon})}$. Similarly, they show the existence of a spectrahedron of small size that cannot be approximated by any LP with a polynomial number of inequalities within a factor of $n^{1/2-\epsilon}$. 
Braverman and Moitra \cite{BM13} used methods from information complexity to show the same size lower bound even for approximation factor $n^{1-\epsilon}$; Braun and Pokutta \cite{BP2013} subsequently simplified and generalized their result and Braun et al. \cite{BJLP2013} show that the amortized log nonnegative rank is characterized by information. Such inapproximability results should be contrasted with H{\aa}stad's famous result~\cite{hastad:inapproxclique} that it is hard to approximate Max-Clique to within a factor $n^{1-\epsilon}$: H{\aa}stad's result gives is a lower bound for \emph{all} algorithms approximating Max-Clique and is conditional on the unproven assumption that RP $\neq$ NP, while the results of \cite{bfps2012} and \cite{BM13} are geometric statements about the nonexistence of polynomial-size extended formulations.

Braun, Fiorini and Pokutta \cite{BFP2013} analyze the average-case polyhedral complexity of the maximum stable set problem showing that the extension complexity of the stable set polytope is high for almost all graphs. Pokutta and Van Vyve \cite{PV13} proved lower bounds on extension complexity for the knapsack problem, and Avis and Tiwary \cite{AT2013} proved lower bounds for the subset-sum and three-dimensional matching problems, as well as others. Kaibel and Weltge \cite{KW13} gave a more direct proof of the lower bound for the cut polytope, via bounding the size of the largest rectangle in the slack matrix, however, they still use the same set of $2^n$ valid constraints that we use here (Lemma~\ref{lem:slack}).

Chan et al. \cite{CLRS13} prove super-polynomial lower bounds on approximate EFs for MAX CSPs (constraint satisfaction problems). In particular, they prove that every $(2-\varepsilon)$-approximate (linear) EF for Max-Cut has $n^{\Omega\left(\frac{\log n}{\log \log n}\right)}$ size. This is striking because the celebrated approximation algorithm of Goemans and Williamson \cite{GoemansWilliamson95} is based on a $\Theta(n)$-size semidefinite EF with an approximation factor of at most $1.14$. Again, the result of Chan et al. \cite{CLRS13} on Max-Cut matches the algorithmic hardness of the problem Khot et al. \cite{KhotKMO07}, which assumes the Unique Games Conjecture.

Rhothvo\ss \cite{R13} proves that the matching polytope has extension complexity $2^{\Omega(n)}$, solving the second part of the main open problem in \cite{Yannakakis91}. This is the first time such a strong bound is obtained for a polytope over which one can optimize in polynomial time. Rothvo\ss's groundbreaking result implies in particular that the extension complexity of the TSP polytope is $2^{\Omega(n)}$, thus going beyond our $2^{\Omega(\sqrt{n})}$ lower bound. 

Not much is known yet about lower bounds on \emph{semidefinite} EFs. Extending the work of Rothvo\ss \cite{Rothvoss11}, Bri\"et, Dadush and Pokutta \cite{BDP13} show that most 0/1 polytopes (i.e., polytopes that are the convex hull of a random subset of $\01^d$) need exponentially large semidefinite EFs. Fawzi and Parrilo \cite{FP13} give exponential lower bounds on the size of semidefinite EFs of explicit polytopes in a restricted setting, where the underlying cone is not the full PSD cone but rather a product of fixed-dimensional PSD cones. Lee and Theis \cite{LT12} obtain polynomial lower bounds based on the support pattern of slack matrices.

Finally, Fiorini et al. \cite{FMPT13} use the notion of conic extensions and its relation to communication complexity to study generalized probabilistic theories, which are different from the usual classical or quantum-mechanical theories, and show that all polynomially-definable 0/1-polytopes have small extension complexity with respect to the completely positive cone.

\subsection{Organization}

The discovery of our lower bounds on extension complexity crucially relied on finding the right matrix $M$ and the right polytope whose slack matrix contains~$M$. In our case, we found these through a connection with quantum communication.  However, these quantum aspects are not strictly necessary for the resulting lower bound proof itself. Hence, in order to make the main results more accessible to those without background or interest in quantum computing, we start by giving a purely classical presentation of those lower bounds. 

In Section~\ref{sec:ourmatrix} we define our matrix $M$ and lower bound the nondeterministic communication complexity of its support matrix.  In Section~\ref{sec:Strong_LBs} we embed $M$ in the slack matrix of the cut polytope in order to lower bound its extension complexity; further reductions then give lower bounds on the extension complexities of the stable~set, and TSP polytopes. In Section~\ref{sec:qcp} we establish the equivalence of PSD factorizations of a (nonnegative) matrix $M$ and one-way quantum protocols that compute $M$ in expectation, and give an efficient quantum protocol in the case where some entry-wise square root of $M$ has small rank. This is then used to provide an exponential separation between quantum and classical protocols for computing a matrix in expectation (equivalently, an exponential separation between nonnegative rank and PSD rank). Concluding remarks are given in Section~\ref{sec:concluding-remarks}.  

\section{A Simple Matrix with Large Rectangle Covering Bound} \label{sec:ourmatrix}

In this section we consider the following $2^n \times 2^n$ matrix $M = M(n)$ with rows and columns indexed by $n$-bit strings $a$ and $b$, and real nonnegative entries:
$$
M_{ab} := (1 - a^{\intercal} b)^2.
$$
Note for later reference that $M_{ab}$ can also be written as 
\begin{equation}\label{eq:Sab}
M_{ab}=1-\langle 2 \diag(a) - aa^{\intercal}, bb^{\intercal}\rangle,
\end{equation}
where $\langle \cdot,\cdot\rangle$ denotes the Frobenius inner product\footnote{The \emph{Frobenius inner product} is the component-wise inner product of two matrices. For matrices $X$ and $Y$ of the same dimensions, this equals $\tr{X^{\intercal}Y}$. When $X$ is symmetric this can also be written $\tr{XY}$.}
and $\diag(a)$ is the $n\times n$ diagonal matrix with the entries of $a$ on its diagonal. Let us verify this identity, using $a,b\in\01^n$:
\begin{align*}
1-&\langle 2 \diag(a) - aa^{\intercal}, bb^{\intercal}\rangle\\ 
&= 1 - 2\langle \diag(a),bb^{\intercal}\rangle  + \langle aa^{\intercal},bb^{\intercal}\rangle\\ 
&= 1- 2 a^{\intercal} b+(a^{\intercal} b)^2=(1 - a^{\intercal} b)^2.
\end{align*}
Let $\suppmat(M)$ be the binary support matrix of $M$, so
$$
\suppmat(M)_{ab} = \left\{
\begin{array}{ll}
1 &\text{if } M_{ab}\neq 0,\\ 
0 &\text{otherwise}.
\end{array}
\right.
$$  
For a given matrix, a {\DEF rectangle} is the Cartesian product of a set of row indices and a set of column indices. In \cite{Wolf03} it was shown that an exponential number of (monochromatic) rectangles are needed to cover all the 1-entries of the support matrix of $M$. Equivalently, the corresponding function $f:\01^n\times\01^n\rightarrow\01$ has  
nondeterministic communication complexity of $\Omega(n)$ bits. For the sake of completeness we repeat the proof here:

\begin{thm}[\cite{Wolf03}]\label{thm:coverlowerboundforM}
Every 1-monochromatic rectangle cover of $\suppmat(M)$ has size $2^{\Omega(n)}$.
\end{thm}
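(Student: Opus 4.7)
The plan is to reduce to Razborov's rectangle-covering lower bound for (Unique) Disjointness. First, I would unpack what a $1$-monochromatic rectangle of $\suppmat(M)$ is. Since $M_{ab} = (1 - a^{\intercal} b)^2$, we have $M_{ab} = 0$ if and only if $a^{\intercal}b = 1$, i.e., viewing $a$ and $b$ as characteristic vectors of subsets of $[n]$, $|a \cap b| = 1$. Consequently, a $1$-monochromatic rectangle $R = A \times B$ is exactly a combinatorial rectangle that contains no pair with intersection equal to one; every $(a,b) \in R$ satisfies $a \cap b = \emptyset$ or $|a \cap b| \geqslant 2$.

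Second, I would restrict attention to disjoint pairs. For every disjoint $(a,b)$ we have $a^{\intercal} b = 0 \neq 1$, so $\suppmat(M)_{ab} = 1$, and hence $(a,b)$ must be covered by some rectangle $R_i$ of the cover. Therefore, if $\suppmat(M)$ admits a $1$-monochromatic cover of size $N$, then there exists a family of $N$ combinatorial rectangles whose union contains every disjoint pair $(a,b)$ and none of which contains any pair with $|a \cap b| = 1$. This is precisely a nondeterministic protocol for Unique Disjointness (the Disjointness function under the promise $|a \cap b| \leqslant 1$).

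Finally, I would quote Razborov's celebrated lower bound~\cite{Razborov92}, which establishes that any such rectangle cover of the disjoint pairs avoiding pairs of intersection one must have size $2^{\Omega(n)}$; this gives $N = 2^{\Omega(n)}$ and finishes the proof. The only ``hard'' part of the argument is already encapsulated in Razborov's result: it is slightly stronger than the plain nondeterministic lower bound for \textsc{disj} (because we permit rectangles to contain pairs with large intersection), but it is exactly what Razborov's discrepancy-style argument, applied to the hard distribution supported on the promise $|a \cap b|\leqslant 1$, actually delivers. With that black box in hand, the reduction above reads off the theorem in a few lines.
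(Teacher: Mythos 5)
Your proposal is correct and follows essentially the same route as the paper: both reduce the statement to the fact that any rectangle family covering all disjoint pairs while avoiding all pairs with $|a\cap b|=1$ must have size $2^{\Omega(n)}$, which is Razborov's corruption bound for (unique) disjointness; the paper merely spells out the corruption statement (the distribution $\mu$, sets $A,B$, and the inequality $\mu(R\cap B)\geqslant \alpha\,\mu(R\cap A)-2^{-\delta n}$) and the short averaging step that you invoke as a black box. Your observation that rectangles may contain pairs with $|a\cap b|\geqslant 2$ and that this is harmless because the hard distribution is supported on $|a\cap b|\leqslant 1$ matches the paper's argument exactly.
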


\begin{proof}
Let $R_1,\ldots,R_k$ be a 1-cover for~$f$, i.e., a set of (possibly overlapping) 1-monochromatic rectangles in the matrix $\suppmat(M)$ that together cover all 1-entries in $\suppmat(M)$.

We use the following result
from \cite[Example~3.22 and Section~4.6]{KushilevitzNisan97},
which is essentially due to Razborov \cite{Razborov92}:
\begin{quote}
There exist sets $A,B\subseteq\01^n\times\01^n$ and probability
distribution $\mu$ on $\01^n\times\01^n$ such that
all $(a,b)\in A$ have $a^{\intercal} b=0$,
all $(a,b)\in B$ have $a^{\intercal} b=1$,
$\mu(A)=3/4$,
and there are constants $\alpha,\delta>0$ (independent of~$n$)
such that for all rectangles~$R$,
$$
\mu(R\cap B) \geqslant \alpha\cdot \mu(R\cap A)-2^{-\delta n}.
$$
(For sufficiently large $n$, $\alpha=1/135$ and $\delta=0.017$ will do.)
\end{quote}
Since the $R_i$ are 1-rectangles, they cannot contain elements from~$B$.
Hence $\mu(R_i\cap B)= 0$ and $\mu(R_i\cap A)\leqslant2^{-\delta n}/\alpha$.
However, since all elements of~$A$ are covered by the $R_i$, we have
\[
\frac{3}{4}=\mu(A)=\mu\left(\bigcup_{i=1}^k(R_i\cap A)\right) \leqslant \sum_{i=1}^k\mu(R_i\cap A)\leqslant k\cdot \frac{2^{-\delta n}}{\alpha}.
\]
Hence $k \geqslant 2^{\Omega(n)}$.
\end{proof}

\section{Strong Lower Bounds on Extension Complexity}

\label{sec:Strong_LBs}

Here we use the matrix $M = M(n)$ defined in the previous section to prove that the (linear) extension complexity of the cut polytope of the $n$-vertex complete graph is $2^{\Omega(n)}$, i.e., every (linear) EF of this polytope has an exponential number of inequalities. Then, via reductions, we prove super-polynomial lower bounds for the stable set polytopes and the TSP polytopes. To start, let us define more precisely the slack matrix of a polytope. For a matrix $A$, let $A_i$ denote the $i$th row of $A$ and let $A^j$ denote the $j$th column of $A$.

Let $P = \{x \in \RR^d \mid Ax \leqslant b\} = \conv(V)$ be a polytope, with $A \in \RR^{m \times d}$, $b \in \RR^m$ and $V = \{v_1,\ldots,v_n\} \subseteq \RR^d$. Then \(S \in \RR_+^{m \times n}\) defined as \(S_{ij} := b_i - A_i v_j\) with \(i \in [m] := \{1,\ldots,m\}\) and \(j \in [n] := \{1,\ldots,n\}\) is the \emph{slack matrix} of \(P\) w.r.t.\ $Ax \leqslant b$ and $V$. We sometimes refer to the submatrix of the slack matrix induced by rows corresponding to facets and columns corresponding to vertices simply as \emph{the} slack matrix of $P$, denoted by~$S(P)$.

Recall that:
\begin{enumerate}
\item an \emph{extended formulation} (EF) of $P$ is a linear system in variables $(x,y)$ such that $x \in P$ if and only if there exists $y$ satisfying the system;
\item an \emph{extension} of $P$ is a polytope $Q \subseteq \RR^e$ such that there is  a linear map $\pi : \RR^e \to \RR^d$ with $\pi(Q) = P$;
\item the \emph{extension complexity} of $P$ is the minimum size (i.e., number of inequalities) of an EF of $P$.
\end{enumerate}
We denote the extension complexity of $P$ by $\xc(P)$. 

\subsection{The Factorization Theorem} \label{sec:factorization_thm}

A \emph{rank-$r$ nonnegative factorization} of a (nonnegative) matrix $M$ is a factorization $M = TU$ where $T$ and $U$ are nonnegative matrices with $r$ columns (in case of \(T\)) and $r$ rows (in case of \(U\)), respectively. The nonnegative rank of $M$, denoted by $\nnegrk(M)$, is thus simply the minimum rank among all nonnegative factorizations of $M$. Note that $\nnegrk(M)$ is also the minimum $r$ such that $M$ is the sum of $r$ nonnegative rank-$1$ matrices. In particular, the nonnegative rank of a matrix $M$ is at least the nonnegative rank of any submatrix of $M$.

The following factorization theorem was proved by Yannakakis (see also \cite{FioriniKaibelPashkovichTheis11}). It can be stated succinctly as: $\xc(P) = \nnegrk(S)$ whenever $P$ is a polytope and $S$ a slack matrix of $P$. We include a sketch of the proof for completeness and we will use the following lemma which follows easily from Farkas's Lemma \cite{SchrijverBook,Ziegler} by first showing that $\mathbf{0}^\intercal x \leqslant 1$ can be derived from the system. 

\begin{lem}
\label{lem:nonnegFarkasPolytope}
Let \(P = \{x \in \RR^d \mid Ax \leqslant b\}\) be a (possibly unbounded) polyhedron that admits a direction \(u \in \RR^d\) with \(-\infty < \min \{u^\intercal x \mid x \in P\} < \max \{u^\intercal x \mid x \in P\} < +\infty\), and  \(c^\intercal x \leqslant \delta\) a valid inequality for \(P\). Then there exist nonnegative multipliers \(\lambda \in \RR^d\) such that \(\lambda^\intercal A = c^\intercal\) and \(\lambda^\intercal b = \delta\), i.e., \(c^\intercal x \leqslant \delta\) can be derived as a nonnegative combination from \(Ax \leqslant b\). In particular, this holds whenever $P$ is a polytope of dimension at least $1$ or whenever $P$ is an unbounded polyhedron that linearly projects to a polytope of dimension at least $1$.
\end{lem}

We are ready to state Yannakakis's factorization theorem.

\begin{thm}[\cite{Yannakakis91}] \label{thm:factorization}
Let $P=\{x\in\RR^d\mid Ax\leqslant b\} = \conv (V)$ be a polytope with $\dim(P) \geqslant 1$, and let $S$ denote the slack matrix of $P$ w.r.t.\ $Ax \leqslant b$ and $V$. Then the following are equivalent for all positive integers $r$:

\begin{enumerate}[(i)]
\item $S$ has nonnegative rank at most $r$;
\item $P$ has an extension of size at most $r$ (that is, with at most $r$ facets);
\item $P$ has an EF of size at most $r$ (that is, with at most $r$ inequalities).
\end{enumerate}
\end{thm}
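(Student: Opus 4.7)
My plan is to prove the three equivalences via the cycle (i) $\Rightarrow$ (iii) $\Rightarrow$ (ii) $\Rightarrow$ (i), where the forward directions are direct constructions and the backward step (ii) $\Rightarrow$ (i) is a Farkas-style argument built on Lemma~\ref{lem:nonnegFarkasPolytope}.

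For (i) $\Rightarrow$ (iii), given a nonnegative factorization $S = TU$ with $T \in \RR_+^{m\times r}$ and $U \in \RR_+^{r \times n}$, I would exhibit the slack-form EF $Q := \{(x,y) \in \RR^{d+r} : Ax + Ty = b,\ y \geqslant \mathbf{0}\}$, which has exactly $r$ inequalities. To check that its $x$-projection equals $P$: any $x = \sum_j \alpha_j v_j \in P$ (with $\alpha \geqslant \mathbf{0}$, $\sum_j \alpha_j = 1$) lifts to $y := U\alpha \geqslant \mathbf{0}$, since the column identity $S^j = TU^j = b - Av_j$ yields $Ax + Ty = \sum_j \alpha_j(Av_j + S^j) = \sum_j \alpha_j b = b$; conversely, $(x,y) \in Q$ gives $Ax = b - Ty \leqslant b$. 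For (iii) $\Rightarrow$ (ii), an EF in slack form with $r$ nonnegativity inequalities already defines a polyhedral extension with at most $r$ facets, and the footnoted remark allows one to take this to be a polytope with the same facet count.

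The main step is (ii) $\Rightarrow$ (i). Suppose $Q = \{z \in \RR^e : Mz = d,\ Nz \leqslant h\}$ is a polytope extension of size at most $r$ (so $N$ has at most $r$ rows) with linear projection $\pi : \RR^e \to \RR^d$ satisfying $\pi(Q) = P$. For each vertex $v_j \in V$, I would fix a lift $z^j \in Q$ with $\pi(z^j) = v_j$, and set $U^j := h - N z^j \in \RR_+^r$. For each row index $i$, the linear functional $z \mapsto A_i \pi(z)$ satisfies $A_i \pi(z) \leqslant b_i$ on all of $Q$, so Lemma~\ref{lem:nonnegFarkasPolytope} applied to $Q$ (whose dimension is at least $\dim(P) \geqslant 1$) yields free multipliers $\alpha^i$ and a nonnegative vector $\beta^i \geqslant \mathbf{0}$ such that $\alpha^i M + \beta^i N = A_i \pi$ and \emph{exactly} $\alpha^i d + \beta^i h = b_i$. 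Defining $T_i := \beta^i$ then gives
$S_{ij} = b_i - A_i v_j = \alpha^i(d - Mz^j) + \beta^i(h - N z^j) = \beta^i U^j = (TU)_{ij}$,
so $S = TU$ is a nonnegative factorization of rank at most $r$.

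The main obstacle is securing the \emph{exact} equality $\alpha^i d + \beta^i h = b_i$ rather than merely $\leqslant b_i$: a generic Farkas derivation delivers only the inequality version, and any residual slack $b_i - \alpha^i d - \beta^i h > 0$ would inject an unwanted rank-$1$ constant term into $S$ (depending only on $i$), inflating the nonnegative rank to $r+1$. Lemma~\ref{lem:nonnegFarkasPolytope} is precisely what sidesteps this, exploiting the positive dimension and boundedness of $Q$ (guaranteed by $\dim(P) \geqslant 1$) to absorb the slack via canceling nonnegative combinations of the constraints describing $Q$.
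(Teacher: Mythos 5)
Your proposal is correct and close in spirit to the paper's argument, but it runs the cycle of implications in the opposite direction, and this changes which step carries the delicate Farkas argument. The paper proves (i)~$\Rightarrow$~(ii)~$\Rightarrow$~(iii)~$\Rightarrow$~(i): it builds the extension $Q=\{(x,y): Ax+Ty=b,\ y\geqslant\mathbf{0}\}$ exactly as you do and observes that after discarding zero columns of $T$ this $Q$ is bounded (hence a polytope with at most $r$ facets), and then does the hard direction as (iii)~$\Rightarrow$~(i) by working with the possibly unbounded polyhedron defined by the EF, lifting the vertices, and using Lemma~\ref{lem:nonnegFarkasPolytope} applied to $P$ itself to absorb the non-facet-defining rows of $S$. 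You instead do the hard step as (ii)~$\Rightarrow$~(i): since (ii) gives you a genuine \emph{polytope} $Q$ of dimension at least~$1$, you can apply Lemma~\ref{lem:nonnegFarkasPolytope} directly to $Q$ (splitting $Mz=d$ into two inequalities, recovering free multipliers $\alpha^i$ and nonnegative $\beta^i$). This is more transparent than the paper's (iii)~$\Rightarrow$~(i), where one must additionally argue that the residual $b_i-\mu^{\intercal}g$ vanishes for facet-defining rows; your version gets the exact equality for free precisely because $Q$ is a polytope, which is the point you correctly flag as the crux.

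The price of your cycle is the step (iii)~$\Rightarrow$~(ii). The solution set of an arbitrary slack-form EF $Ex+Fy=g$, $y\geqslant\mathbf{0}$ need not be bounded (its recession cone is $\{(0,\hat y): F\hat y=0,\ \hat y\geqslant\mathbf{0}\}$, which can be nontrivial), so to conclude that a \emph{polytope} extension with at most $r$ facets exists you must invoke the paper's footnote claim that minimum-facet extensions are always polytopes. That claim is stated without proof in the paper, and the paper's own chain (i)~$\Rightarrow$~(ii)~$\Rightarrow$~(iii)~$\Rightarrow$~(i) is arranged specifically so as not to need it: boundedness is secured constructively in (i)~$\Rightarrow$~(ii) by the no-zero-column observation, and (iii)~$\Rightarrow$~(i) never passes through (ii) at all. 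So your argument is sound but not self-contained at this one arc; either prove the footnote, or reroute as the paper does, or replace (iii)~$\Rightarrow$~(ii) by (iii)~$\Rightarrow$~(i) adapting your Farkas step to the polyhedron $Q$ and handling the possible gap $b_i-\mu^{\intercal}g>0$ by first treating facet-defining rows (where a preimage of the facet forces the gap to zero) and then the remaining rows via Lemma~\ref{lem:nonnegFarkasPolytope} on $P$.

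Two small remarks on your (i)~$\Rightarrow$~(iii): what you produce is really the same $Q$ the paper uses for (i)~$\Rightarrow$~(ii); if you add the observation that you may discard zero columns of $T$ (and that then $Q$ is bounded), you would get (i)~$\Rightarrow$~(ii) directly and could drop the footnote dependency entirely, proving (i)~$\Rightarrow$~(ii), (ii)~$\Rightarrow$~(iii) trivially, and your (ii)~$\Rightarrow$~(i).
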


\begin{proof}
It should be clear that (ii) implies (iii). We prove that (i) implies (ii), and then that (iii) implies (i). 

First, consider a rank-$r^*$ nonnegative factorization $S = TU$ of the slack matrix of $P$, where $r^* \leqslant r$. Notice that we may assume that no column of $T$ is zero, because otherwise $r^*$ can be decreased. We claim that $P$ is the image of
$$
Q := \{(x,y)\in\RR^{d+r^*} \mid Ax+Ty=b,\ y\geqslant \mathbf{0}\}
$$ 
under the projection $\pi_x : (x,y) \mapsto x$ onto the $x$-space. We see immediately that $\pi_x(Q) \subseteq P$ since $Ty \geqslant \mathbf{0}$. To prove the inclusion $P \subseteq \pi_x(Q)$, it suffices to remark that for each point $v_j \in V$ the point $(v_j,U^j)$ is in $Q$ since
$$
Av_j + TU^j = Av_j + b - Av_j = b \text{ and } U^j \geqslant \mathbf{0}.
$$ 
Since no column of $T$ is zero, $Q$ is a polytope. Moreover, $Q$ has at most $r^* \leqslant r$ facets, and is thus an extension of $P$ of size at most $r$. This proves that (i) implies (ii).

Second, suppose that the system
$$
E^{=}x + F^{=}y = g^{=},\ E^{\leqslant}x + F^{\leqslant}y \leqslant g^{\leqslant}
$$
with $(x,y) \in \RR^{d+k}$ defines an EF of $P$ with at most $r$ inequalities. Let $Q \subseteq \RR^{d+k}$ denote the set of solutions to this system. Thus $Q$ is a (not necessarily bounded) polyhedron. For each point $v_j \in V$, pick $w_j \in \RR^k$ such that $(v_j,w_j) \in Q$. Because
$$
Ax \leqslant b \iff \exists y : E^{=}x + F^{=}y = g^{=},\ E^{\leqslant}x + F^{\leqslant}y \leqslant g^{\leqslant},
$$
each inequality in $Ax \leqslant b$ is valid for all points of $Q$. Let $S_Q$ be the nonnegative matrix that records the slacks of the points $(v_j,w_j)$ with respect to the inequalities of $E^{\leqslant}x + F^{\leqslant}y \leqslant g^{\leqslant}$, and then of $A x \leqslant b$. By construction, the submatrix obtained from $S_Q$ by deleting the rows corresponding to the inequalities of $E^{\leqslant}x + F^{\leqslant}y \leqslant g^{\leqslant}$ and leaving only those corresponding to the inequalities of $Ax \leqslant b$ is exactly $S$, thus $\nnegrk(S) \leqslant \nnegrk(S_Q)$. Furthermore, by Lemma~\ref{lem:nonnegFarkasPolytope} any valid inequality \(c^\intercal x \leqslant \delta\) is a nonnegative combination of inequalities of the system \(Ax \leqslant b\) and thus every row of $S_Q$ is a nonnegative combination of the first $r$ rows of $S_Q$. Thus, $\nnegrk(S_Q) \leqslant r$. Therefore, $\nnegrk(S) \leqslant r$. Hence (iii) implies~(i).
\end{proof}

\begin{remark} \label{rem:factorization}
By the factorization theorem, if polytope $P \subseteq \RR^d$ has an EF of size $r$, then its slack matrix $S$ has a nonnegative factorization $S = TU$ of rank $r$. But then $Ax+Ty=b,\ y\geqslant \mathbf{0}$ is an EF of $P$ in slack form with $d+r$ variables, $r$ inequalities and $m$ equalities, where $m$ is the number of rows in the linear description $Ax \leqslant b$ of $P$. Notice that if $m > d+r$ some of these equalities will be redundant, and that there always exists a subset of at most $d+r$ equalities defining the same subspace. By removing redundant equalities from the EF, we can assume that there are at most $d+r$ equalities in the EF.
\end{remark}


We would like to emphasize that we will not restrict the slack matrix to have rows corresponding only to the facet-defining inequalities. This is not an issue since appending rows corresponding to redundant\footnote{An inequality of a linear system is called \emph{redundant} if removing the inequality from the system does not change the set of solutions.} inequalities does not change the nonnegative rank of the slack matrix. This fact was already used in the second part of the previous proof.

Theorem~\ref{thm:factorization} shows in particular that we can lower bound the extension complexity of~$P$ by lower bounding the nonnegative rank of its slack matrix~$S$; in fact it suffices to lower bound the nonnegative rank of any submatrix of the slack matrix \(S\) corresponding to an implied system of inequalities. To that end, Yannakakis made the following connection with nondeterministic communication complexity. Again, we include the (easy) proof for completeness.

\begin{thm}[\cite{Yannakakis91}] \label{thm:nnegrkvsndetcc}
Let $M$ be any matrix with nonnegative real entries and $\suppmat(M)$ its support matrix. Then $\nnegrk(M)$ is lower bounded by the rectangle covering bound for $\suppmat(M)$.
\end{thm}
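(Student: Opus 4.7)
The plan is to convert any nonnegative factorization of $M$ into a rectangle cover of $\suppmat(M)$ of the same size. Concretely, suppose $M=TU$ is a rank-$r$ nonnegative factorization, so $T\in\RR_+^{m\times r}$, $U\in\RR_+^{r\times n}$, and for all rows $a$ and columns $b$,
\[
M_{ab}=\sum_{k=1}^{r} T_{ak}\,U_{kb}.
\]
Because every summand is nonnegative, $M_{ab}>0$ if and only if there is some index $k\in[r]$ such that both $T_{ak}>0$ and $U_{kb}>0$. This is the key observation that will let us read off a cover.

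For each $k\in[r]$, I would define
\[
R_k \;:=\; \{a : T_{ak}>0\}\;\times\;\{b : U_{kb}>0\},
\]
which is by construction a combinatorial rectangle. Two things to check, both straightforward: first, each $R_k$ is 1-mono\-chro\-ma\-tic in $\suppmat(M)$, since $(a,b)\in R_k$ forces $T_{ak}U_{kb}>0$, hence $M_{ab}>0$ and $\suppmat(M)_{ab}=1$; second, the $R_k$ together cover every 1-entry of $\suppmat(M)$, because if $\suppmat(M)_{ab}=1$ then $M_{ab}>0$, so at least one term $T_{ak}U_{kb}$ in the sum above is strictly positive, placing $(a,b)$ in the corresponding $R_k$.

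Thus $R_1,\ldots,R_r$ form a 1-monochromatic rectangle cover of $\suppmat(M)$ of size $r=\nnegrk(M)$, and taking the minimum over all nonnegative factorizations proves $\nnegrk(M)\geqslant$ the rectangle covering bound of $\suppmat(M)$. There is no real obstacle here; the only thing to be careful about is that the argument uses nonnegativity in an essential way (to rule out cancellations in the sum), which is exactly what distinguishes $\nnegrk$ from the ordinary rank and explains why the analogous statement fails for $\rk(M)$.
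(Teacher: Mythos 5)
Your proof is correct and is essentially the same as the paper's: the paper writes the factorization as a sum of nonnegative rank-one terms $M=\sum_{k=1}^r T^kU_k$ and takes supports to get $\supp(M)=\bigcup_k \supp(T^k)\times\supp(U_k)$, which is exactly your cover $R_1,\ldots,R_r$. You just spell out the monochromaticity and covering checks more explicitly.
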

\begin{proof}
If $M = TU$ is a rank-$r$ nonnegative factorization of $M$, then $S$ can be written as the sum of $r$ nonnegative rank-$1$ matrices:
$$
S = \sum_{k=1}^r T^k U_k.
$$
Taking the support on each side, we find
\begin{eqnarray*}
\supp(S) 
&=& \displaystyle \bigcup_{k=1}^r \supp(T^kU_k)\\
&=& \displaystyle \bigcup_{k=1}^r \supp(T^k) \times \supp(U_k).
\end{eqnarray*}
Therefore, $\suppmat(M)$ has a $1$-monochromatic rectangle cover with $r$ rectangles.
\end{proof}

\subsection{Cut and Correlation Polytopes} \label{sec:cut_polytopes}

Let $K_n = (V_n,E_n)$ denote the $n$-vertex complete graph. For a set $X$ of vertices of $K_n$, we let $\delta(X)$ denote the set of edges of $K_n$ with one endpoint in $X$ and the other in its complement $\bar{X}$. This set $\delta(X)$ is known as the \emph{cut} defined by $X$. For a subset $F$ of edges of $K_n$, we let $\chi^{F} \in \mathbb{R}^{E_n}$ denote the \emph{characteristic vector} of $F$, with $\chi^F_e = 1$ if $e \in F$ and $\chi^F_e = 0$ otherwise. The \emph{cut polytope} $\CUT(n)$ is defined as the convex hull of the characteristic vectors of all cuts in the complete graph $K_n = (V_n,E_n)$. That is,
\[
\CUT(n) := \conv \{\chi^{\delta(X)} \in \mathbb{R}^{E_n} \mid X \subseteq V_n\}.
\]

We will not deal with the cut polytopes directly, but rather with 0/1-polytopes that are linearly isomorphic to them. Two polytopes are called \emph{linearly isomorphic} if one can be obtained from
the other by applying an invertible linear map. It is easy to check that if
$P_1$ and $P_2$ are linearly isomorphic then they have same number of vertices
and facets. Furthermore, any extended formulation for one can be converted to an
extended formulation of the other using the same transformation. So any bound on
the extension complexity of one polytope applies to any other polytope that is
linearly isomorphic to it. The \emph{correlation polytope} (or \emph{Boolean
quadric polytope}) $\COR(n)$ is defined as the convex hull of all the rank-$1$
binary symmetric matrices of size $n \times n$. In other words, 

\[
\COR(n) := \conv \{bb^{\intercal} \in \mathbb{R}^{n \times n} \mid b \in \{0,1\}^n\}.
\]
We use the following known result:

\begin{thm}[\cite{DeSimone90}]
\label{thm:DeSimone}
For all $n$, $\COR(n)$ is linearly isomorphic to $\CUT(n+1)$. 
\end{thm}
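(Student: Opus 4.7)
My plan is to exhibit an explicit linear map $\phi$ from the ambient space of $\COR(n)$ (symmetric $n \times n$ matrices) to the ambient space of $\CUT(n+1)$ ($\RR^{E_{n+1}}$), show it carries the vertex set of $\COR(n)$ bijectively onto the vertex set of $\CUT(n+1)$, and then invert it by explicit linear formulas. Index the vertices of $K_{n+1}$ by $\{0,1,\ldots,n\}$ so that $V_{n+1} = \{0\} \cup V_n$ with $V_n = \{1,\ldots,n\}$.

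First I would note the natural bijection on vertex sets. The vertices of $\COR(n)$ are the rank-$1$ matrices $bb^{\intercal}$ for $b \in \{0,1\}^n$, giving $2^n$ vertices. The vertices of $\CUT(n+1)$ are the characteristic vectors $\chi^{\delta(X)}$ for $X \subseteq V_{n+1}$; since $\delta(X) = \delta(\bar{X})$, each cut arises from a unique $X$ with $0 \notin X$, i.e.\ $X \subseteq V_n$, also giving $2^n$ vertices, parametrized by $b = \chi^X \in \{0,1\}^n$.

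Next I would define the linear map $\phi$ on a symmetric matrix $A$ by
\[
\phi(A)_{\{0,i\}} := A_{ii} \quad (1 \leqslant i \leqslant n), \qquad \phi(A)_{\{i,j\}} := A_{ii} + A_{jj} - 2 A_{ij} \quad (1 \leqslant i < j \leqslant n).
\]
A direct computation on $A = bb^{\intercal}$ gives $\phi(bb^{\intercal})_{\{0,i\}} = b_i^2 = b_i$, which equals $1$ iff $i \in X$, and $\phi(bb^{\intercal})_{\{i,j\}} = b_i + b_j - 2 b_i b_j$, which equals $1$ iff exactly one of $i,j$ lies in $X$. Both cases match $\chi^{\delta(X)}$, confirming that $\phi(bb^{\intercal}) = \chi^{\delta(X)}$ under the bijection $b \leftrightarrow X$. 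Since $\phi$ is linear and a polytope equals the convex hull of its vertices, this already yields $\phi(\COR(n)) = \CUT(n+1)$.

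Finally, to upgrade this to a linear \emph{isomorphism} I would write down an explicit left inverse: given $y \in \RR^{E_{n+1}}$ in the image, set $A_{ii} := y_{\{0,i\}}$ and $A_{ij} := \tfrac{1}{2}(y_{\{0,i\}} + y_{\{0,j\}} - y_{\{i,j\}})$ for $i < j$. This is linear in $y$, and checking on the vertices $y = \chi^{\delta(X)}$ recovers $A = bb^{\intercal}$, so the map is bijective on vertex sets, hence on the convex hulls. I do not anticipate a serious obstacle here: the dimensions match ($\binom{n+1}{2}$ on both sides) and the computation of $\phi(bb^{\intercal})$ is the only real content; the step requiring the most care is fixing the convention that canonically picks the representative $X \subseteq V_n$ for each cut so that the vertex bijection is unambiguous.
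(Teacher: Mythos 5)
Your proposal is correct: the map you define is precisely the standard ``covariance mapping'' of De Simone, which is how this isomorphism is proved in the cited reference; the paper itself states Theorem~\ref{thm:DeSimone} by citation only and gives no proof. Your vertex computation $\phi(bb^{\intercal}) = \chi^{\delta(X)}$ together with the explicit linear inverse (both ambient spaces having dimension $\binom{n+1}{2}$) fully establishes the linear isomorphism.
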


Consider the matrix $M$ defined in Section \ref{sec:ourmatrix}. Because $M$ is nonnegative, Eq.~\eqref{eq:Sab} gives us a linear inequality that is satisfied by all vertices $bb^{\intercal}$ of $\COR(n)$, and hence (by convexity) is satisfied by all points of $\COR(n)$:

\begin{lem}
\label{lem:slack}
For all $a \in \{0,1\}^n$, the inequality
\begin{equation}
\label{eq:COR}
\langle 2 \diag(a) - aa^{\intercal}, x\rangle \leqslant 1
\end{equation}
is valid for $\COR(n)$. Moreover, the slack of vertex $x = bb^{\intercal}$ with respect to \eqref{eq:COR} is precisely $M_{ab}$. 
\end{lem}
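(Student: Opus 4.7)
The plan is to observe that both claims are essentially immediate consequences of identity~\eqref{eq:Sab}, which has already been verified in the excerpt. In fact, once the slack computation is carried out at vertices, validity on the whole polytope follows for free from convexity, so I would handle slack first and validity second.

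First, I would compute the slack of the inequality~\eqref{eq:COR} at an arbitrary vertex $x = bb^{\intercal}$ of $\COR(n)$, where $b \in \{0,1\}^n$. Plugging $x = bb^{\intercal}$ into the left-hand side of \eqref{eq:COR} gives $\langle 2\diag(a) - aa^{\intercal},\, bb^{\intercal}\rangle$, so the slack is
\[
1 - \langle 2\diag(a) - aa^{\intercal},\, bb^{\intercal}\rangle,
\]
which by identity~\eqref{eq:Sab} equals $M_{ab} = (1 - a^{\intercal} b)^2$. This establishes the second claim of the lemma verbatim.

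Next, I would deduce validity of~\eqref{eq:COR} for all of $\COR(n)$ from this slack computation. Since $M_{ab} = (1 - a^{\intercal} b)^2$ is the square of a real number, it is nonnegative for every $b \in \{0,1\}^n$. Hence every vertex $bb^{\intercal}$ of $\COR(n)$ satisfies~\eqref{eq:COR}. Because $\COR(n)$ is by definition the convex hull of these vertices and the map $x \mapsto \langle 2\diag(a) - aa^{\intercal}, x\rangle$ is linear (hence affine), the inequality extends to every point of $\COR(n)$ by taking convex combinations.

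There really is no obstacle here: the work has been front-loaded into the algebraic identity~\eqref{eq:Sab} and into the observation that $M_{ab}$ is by construction a square. The only mildly nontrivial point is to recognize that the quantity to compute is precisely the slack, not merely the value of the left-hand side, and to note that the nonnegativity of $M_{ab}$ as a perfect square is exactly what certifies validity. No separate argument via Farkas, facet analysis, or polyhedral geometry is required.
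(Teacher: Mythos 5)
Your proof is correct and matches the paper's own justification, which likewise derives both claims directly from identity~\eqref{eq:Sab}: the slack at a vertex $bb^{\intercal}$ is $1-\langle 2\diag(a)-aa^{\intercal},bb^{\intercal}\rangle = M_{ab}$, and nonnegativity of $M_{ab}$ plus convexity gives validity on all of $\COR(n)$. Nothing further is needed.
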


We remark that~\eqref{eq:COR} is weaker than the \emph{hypermetric inequality} \cite{DezaLaurentBook} $\langle \diag(a)-aa^{\intercal},x \rangle \leqslant 0$, in the sense that the face defined by the former is strictly contained in the face defined by the latter. Nevertheless, we persist in using~\eqref{eq:COR}. Now, we prove the main result of this section.

\begin{thm}
\label{thm:LB_CUT}
There exists some constant $C > 0$ such that, for all $n$, 
$$
\xc(\CUT(n+1)) = \xc(\COR(n)) \geqslant 2^{Cn}\ .
$$
In particular, the extension complexity of $\CUT(n)$ is $2^{\Omega(n)}$.
\end{thm}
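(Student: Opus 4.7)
The plan is to assemble the pieces already established in the preceding subsections. The equality $\xc(\CUT(n+1)) = \xc(\COR(n))$ is immediate from Theorem~\ref{thm:DeSimone}, since a linear isomorphism between two polytopes induces a bijection between their extensions (map an extension of one into the extension of the other by composing with the isomorphism), and hence preserves extension complexity. So it suffices to focus entirely on $\COR(n)$.

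To lower bound $\xc(\COR(n))$, I would invoke Theorem~\ref{thm:factorization}, which tells us that the extension complexity of $\COR(n)$ equals the nonnegative rank of any slack matrix of $\COR(n)$ where the rows range over an implied system of inequalities and the columns over the vertex set. Following the remark made right after Theorem~\ref{thm:factorization}, I would take a slack matrix $S$ of $\COR(n)$ that includes all $2^n$ inequalities of the form \eqref{eq:COR}, one for each $a \in \{0,1\}^n$, and all $2^n$ vertices $bb^{\intercal}$. By Lemma~\ref{lem:slack}, the entry of $S$ at row $a$ and column $b$ is exactly $M_{ab} = (1-a^{\intercal}b)^2$. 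Thus the matrix $M = M(n)$ from Section~\ref{sec:ourmatrix} occurs as a submatrix of a slack matrix of $\COR(n)$.

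Since nonnegative rank is monotone under taking submatrices (a nonnegative factorization of a matrix restricts to a nonnegative factorization of any submatrix), we obtain
\[
\xc(\COR(n)) = \nnegrk(S) \geqslant \nnegrk(M).
\]
Now I would chain together the two results about $M$: Theorem~\ref{thm:nnegrkvsndetcc} bounds $\nnegrk(M)$ below by the rectangle covering bound for $\suppmat(M)$, and Theorem~\ref{thm:coverlowerboundforM} shows this bound is $2^{\Omega(n)}$. Putting it all together gives $\xc(\COR(n)) \geqslant 2^{Cn}$ for some absolute constant $C > 0$, which combined with the linear isomorphism yields $\xc(\CUT(n+1)) \geqslant 2^{Cn}$. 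The ``in particular'' statement about $\CUT(n)$ follows by a trivial re-indexing (replace $n$ by $n-1$, absorbing the resulting constant factor loss into the $\Omega$).

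There is essentially no obstacle beyond correctly identifying the right submatrix: all the heavy lifting has been done in Sections~\ref{sec:ourmatrix} and~\ref{sec:factorization_thm} and in Lemma~\ref{lem:slack}. The only small technical point to be careful about is that the inequalities \eqref{eq:COR} are not known to be facet-defining (indeed, as remarked after Lemma~\ref{lem:slack}, they are weaker than the hypermetric inequalities); however, this does not matter, because Theorem~\ref{thm:factorization} and the observation about redundant rows ensure that the nonnegative rank of an enlarged slack matrix, formed by appending valid-but-possibly-redundant rows, still equals $\xc(\COR(n))$, so the submatrix argument goes through unchanged.
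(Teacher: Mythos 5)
Your proposal is correct and matches the paper's own proof essentially step for step: the equality via Theorem~\ref{thm:DeSimone}, the identification via Lemma~\ref{lem:slack} of $M$ as the submatrix of a slack matrix of $\COR(n)$ built from the $2^n$ inequalities~\eqref{eq:COR}, and the chain $\xc(\COR(n)) = \nnegrk(S) \geqslant \nnegrk(M) \geqslant 2^{Cn}$ using Theorems~\ref{thm:factorization}, \ref{thm:nnegrkvsndetcc} and~\ref{thm:coverlowerboundforM} together with submatrix monotonicity of the nonnegative rank. Your side remark on the non-facet-defining (possibly redundant) nature of~\eqref{eq:COR} is exactly the point the paper addresses via its remark on redundant rows after Theorem~\ref{thm:factorization}, so nothing is missing.
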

\begin{proof}
The equality is implied by Theorem~\ref{thm:DeSimone}. Now, consider any system of linear inequalities describing $\COR(n)$ starting with the $2^n$ inequalities~\eqref{eq:COR}, and a slack matrix $S$ w.r.t.\ this system and $\{bb^{\intercal} \mid b \in \{0,1\}^n\}$. Next delete from this slack matrix all rows except the $2^n$ first rows. By Lemma \ref{lem:slack}, the resulting $2^n\times 2^n$ matrix is~$M$. Using Theorems~\ref{thm:factorization}, \ref{thm:nnegrkvsndetcc}, and~\ref{thm:coverlowerboundforM},  and the fact that the nonnegative rank of a matrix is at least the nonnegative rank of any of its submatrices, we have
\begin{eqnarray*}
\xc(\COR(n)) & =    & \nnegrk(S)\\
             & \geqslant &\nnegrk(M)\\
             & \geqslant & 2^{Cn}
\end{eqnarray*}
for some positive constant $C$. 
\end{proof}

In their follow-up work, Kaibel and Weltge \cite{KW13} proved that one can take $C = \log(3/2) \approx 0.58$.

\subsection{Stable Set Polytopes} \label{sec:stable_set_polytopes}

A {\DEF stable set} $S$ (also called an {\DEF independent set}) of a graph $G = (V,E)$ is a subset $S \subseteq  V$ of the vertices such that no two of them are adjacent. For a subset $S \subseteq V$, we let $\chi^{S} \in \mathbb{R}^{V}$ denote the \emph{characteristic vector} of $S$, with $\chi^S_v = 1$ if $v \in S$ and $\chi^S_v = 0$ otherwise. The {\DEF stable set polytope}, denoted $\STAB(G)$, is the convex hull of the characteristic vectors of all stable sets in $G$, i.e., 
\[
\STAB(G) := \conv\{\chi^S \in\mathbb{R}^{V} \mid S \text{ stable set of } G\}.
\]

Recall that a polytope $Q$ is an extension of a polytope $P$ if $P$ is the image of $Q$ under a linear projection.

\begin{lem}\label{lem:stab_embedding}
For each $n$, there exists a graph $H_n$ with $O(n^2)$ vertices such that $\STAB(H_n)$ contains a face that is an extension of $\COR(n).$
\end{lem}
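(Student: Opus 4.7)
The plan is to build $H_n$ from small ``indicator gadgets'' so that stable sets on an appropriate face encode both a choice of $b \in \01^n$ and the entries of $bb^{\intercal}$. For each $i \in [n]$, I would introduce two vertices $v_i^0, v_i^1$ joined by an edge; for each pair $1 \leqslant i < j \leqslant n$, four vertices $w_{ij}^{\alpha,\beta}$ (with $\alpha,\beta \in \{0,1\}$) forming a $4$-clique; and for every such $i,j,\alpha,\beta$ I would add cross-edges $\{w_{ij}^{\alpha,\beta}, v_i^{1-\alpha}\}$ and $\{w_{ij}^{\alpha,\beta}, v_j^{1-\beta}\}$ to force the $w$-choice to be consistent with the $v$-choices. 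The total vertex count is $2n + 4\binom{n}{2} = O(n^2)$.

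I would then let $F$ be the face of $\STAB(H_n)$ cut out by making every clique inequality tight: $v_i^0 + v_i^1 = 1$ for each $i$, and $\sum_{\alpha,\beta \in \{0,1\}} w_{ij}^{\alpha,\beta} = 1$ for each $i<j$. For any $b \in \01^n$, the set $S_b := \{v_i^{b_i} : i \in [n]\} \cup \{w_{ij}^{b_i,b_j} : i<j\}$ is a stable set whose characteristic vector lies in $F$; this I would verify by a short inspection of the three edge types. Conversely, any vertex $\chi^S$ of $F$ must contain exactly one $v_i$-vertex per index $i$ (defining $b_i \in \{0,1\}$ by $v_i^{b_i} \in S$) and exactly one $w_{ij}$-vertex per pair; the cross-edges then immediately force the superscript of that $w$-vertex to be $(b_i,b_j)$. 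Hence the vertices of $F$ are in bijection with $\01^n$ via $b \mapsto \chi^{S_b}$.

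Finally, I would define the linear map $\pi : \RR^{V(H_n)} \to \RR^{n \times n}$ by $\pi(z)_{ii} := z_{v_i^1}$ and $\pi(z)_{ij} := \pi(z)_{ji} := z_{w_{ij}^{1,1}}$ for $i<j$. Evaluating on $\chi^{S_b}$ yields $\pi(\chi^{S_b}) = bb^{\intercal}$, since the diagonal reads off $b_i$ and the off-diagonal $(i,j)$-entry equals $1$ exactly when $b_i = b_j = 1$, that is, when $b_i b_j = 1$. Because $F$ is the convex hull of the $\chi^{S_b}$'s and $\pi$ is linear, $\pi(F) = \conv\{bb^{\intercal} : b \in \01^n\} = \COR(n)$, exhibiting $F$ as an extension of $\COR(n)$.

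The main obstacle is the converse direction of the bijection: ruling out any ``rogue'' stable set that meets every clique inequality with equality while using a $w_{ij}^{\alpha,\beta}$ whose superscript is inconsistent with its $v$-neighbors. The cross-edges have been engineered precisely to exclude this, and I expect the verification to reduce to a one-line case analysis comparing $\alpha$ with $b_i$ and $\beta$ with $b_j$. Once the bijection is in hand, the extension property follows from linearity of $\pi$ alone.
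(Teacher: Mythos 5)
Your construction is exactly the paper's (your $v_i^0,v_i^1$ and $w_{ij}^{\alpha,\beta}$ with cross-edges to $v_i^{1-\alpha},v_j^{1-\beta}$ are a relabeling of its vertex-cliques and edge-cliques with the same eight consistency edges), and your face, bijection with $\{0,1\}^n$, and projection onto the $v_i^1$ and $w_{ij}^{1,1}$ coordinates mirror its argument step for step; defining the face via tight clique inequalities rather than as the maximum-cardinality stable sets is only a cosmetic difference. The proof is correct and essentially identical to the paper's.
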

\newcommand{\e}[1]{#1}
\newcommand{\ee}[1]{\overline{#1}}
\newcommand{\eee}[1]{\underline{#1}}
\newcommand{\eeee}[1]{\overline{\underline{#1}}}
\begin{proof}
Consider the complete graph $K_n$ with vertex set $V_n := [n]$. For each vertex $i$ of $K_{n}$ we create two vertices labeled $\e{ii}, \ee{ii}$ in $H_n$ and an edge between them. Let us label the edges of $K_n$ in the following way. The edge between vertices $i$ and $j$ with $i<j$ gets the label $ij$. Now, for each edge $ij$ of $K_{n},$ we add to $H_n$ four vertices labeled $\e{ij}, \ee{ij}, \eee{ij}, \eeee{ij}$ and all possible six edges between them. We further add the following eight edges to $H_n$:
\begin{align*}
\{\e{ij},\ee{ii}\}, \{\e{ij},\ee{jj}\},
\{\ee{ij},\e{ii}\}, \{\ee{ij},\ee{jj}\},\\
\{\eee{ij},\ee{ii}\}, \{\eee{ij},\e{jj}\},
\{\eeee{ij},\e{ii}\}, \{\eeee{ij},\e{jj}\}.
\end{align*}
See Fig.~\ref{fig:stab_embedding} for an illustration. The number of vertices in $H_n$ is $2n+4{n\choose 2}.$

\begin{figure}[!htb]
  \centering
  \includegraphics[width=0.35\textwidth]{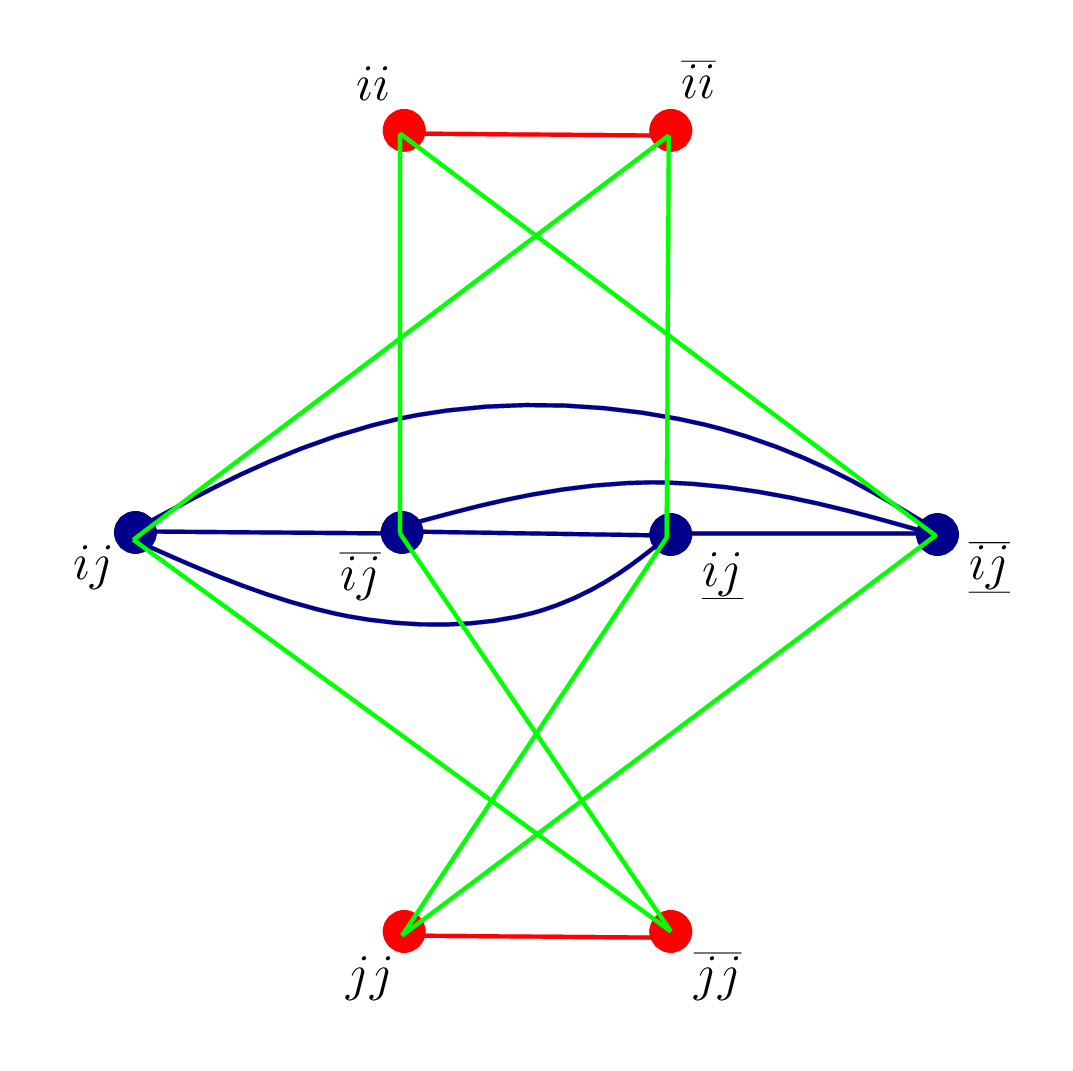}
  \caption{The edges and vertices of $H_n$ corresponding to vertices $i$, $j$ and edge $ij$ of $K_{n}.$}
  \label{fig:stab_embedding}
\end{figure}

Thus the vertices and edges of $K_{n}$ are represented by cliques of size $2$ and $4$ respectively in $H_n$. We will refer to these as \emph{vertex-cliques} and \emph{edge-cliques} respectively. Consider the face $F = F(n)$ of $\STAB(H_n)$ whose vertices correspond to the stable sets containing exactly one vertex in each vertex-clique and each edge-clique. (The vertices in this face correspond exactly to stable sets of $H_n$ with maximum cardinality.) 

Consider the linear map $\pi : \RR^{V(H_n)} \to \RR^{n \times n}$ mapping a point $x \in \RR^{V(H_n)}$ to the point $y \in \RR^{n \times n}$ such that $y_{ij} = y_{ji} = x_{ij}$ for $i \leqslant j$. In this equation, the subscripts in $y_{ij}$ and $y_{ji}$ refer to an ordered pair of elements in $[n]$, while the subscript in $x_{ij}$ refers to a vertex of $H_n$ that corresponds either to a vertex of $K_{n}$ (if $i=j$) or to an edge of $K_{n}$  (if $i\neq j$).

We claim that the image of $F$ under $\pi$ is $\COR(n)$, hence $F$ is an extension of $\COR(n)$; observe that is suffices to consider 0/1 vertices as \(F\) is a 0/1 polytope and the projection is an orthogonal projection. Indeed, pick an arbitrary stable set $S$ of $H_n$ such that $x:=\chi^S$ is on face~$F$.
Then define $b \in \{0,1\}^{n}$ by letting $b_i := 1$ if $\e{ii} \in S$ and $b_i := 0$ otherwise (i.e., $\ee{ii} \in S$). Notice that for the edge $ij$ of $K_{n}$ we have $\e{ij} \in S$ if and only if both vertices $ii$ and $jj$ belong to $S$. Hence, $\pi(x) = y = bb^{\intercal}$ is a vertex of $\COR(n)$. This proves $\pi(F) \subseteq \COR(n)$. Now pick a vertex $y := bb^{\intercal}$ of $\COR(n)$ and consider the unique maximum stable set $S$ that contains vertex $\e{ii}$ if $b_i = 1$ and vertex $\ee{ii}$ if $b_i = 0$. Then $x := \chi^S$ is a vertex of $F$ with $\pi(x) = y$. Hence, $\pi(F) \supseteq \COR(n)$. Thus $\pi(F) = \COR(n)$. This concludes the proof. 
\end{proof}

Our next lemma establishes simple monotonicity properties of the extension complexity used in our reduction.

\begin{lem} \label{lem:monotone}
Let $P$, $Q$ and $F$ be polytopes. Then the following hold:
\begin{enumerate}[(i)]
\item if $F$ is an extension of $P$, then $\xc(F) \geqslant \xc(P)$;
\item if $F$ is a face of $Q$, then $\xc(Q) \geqslant \xc(F)$.
\end{enumerate}
\end{lem}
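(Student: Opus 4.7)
\medskip

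\noindent\textbf{Proof plan.} Both parts are intended to follow quickly from the definitions, once one exploits the compositional behaviour of linear projections and the elementary fact that every face of a polytope with $r$ facets has at most $r$ facets (since each facet of a face arises as the intersection of the face with some facet of the ambient polytope).

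For (i), I will argue by composing projections. Let $r := \xc(F)$ and pick a size-$r$ extension $R$ of $F$, i.e., a polytope $R$ together with a linear map $\sigma$ with $\sigma(R) = F$. Since $F$ is itself an extension of $P$, there is a linear map $\pi$ with $\pi(F) = P$. The composition $\pi \circ \sigma$ is linear and satisfies $(\pi\circ\sigma)(R) = \pi(F) = P$. Thus $R$ is an extension of $P$ of size $r$, which gives $\xc(P) \leqslant r = \xc(F)$.

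For (ii), let $r := \xc(Q)$ and pick a size-$r$ extension $R$ of $Q$ with projection $\pi$, so $\pi(R) = Q$. Since $F$ is a face of $Q$, there is a linear functional $c$ and a scalar $\delta$ such that $c^{\intercal}x \leqslant \delta$ is valid for $Q$ and $F = \{x \in Q \mid c^{\intercal}x = \delta\}$. Define
\[
F' := R \cap \pi^{-1}(F) = \{y \in R \mid c^{\intercal}\pi(y) = \delta\}.
\]
Since $y \mapsto c^{\intercal}\pi(y)$ is a linear functional on $R$ whose associated inequality is valid on $R$ (because $\pi(R) \subseteq Q$), the set $F'$ is a face of $R$. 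I will then check that $\pi(F') = F$: the inclusion $\pi(F') \subseteq F$ is immediate from the definition, and conversely every $x \in F \subseteq Q = \pi(R)$ has a preimage $y \in R$, which automatically lies in $F'$. Hence $F'$ is an extension of $F$. Finally, because $F'$ is a face of the polytope $R$ whose number of facets is at most $r$, $F'$ itself has at most $r$ facets, yielding $\xc(F) \leqslant r = \xc(Q)$.

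Neither part looks substantively difficult; the only point requiring a little care is the claim that a face of a polytope with $r$ facets has at most $r$ facets, but this is a standard consequence of the fact that the facets of a face $F'$ of $R$ are exactly the nonempty proper intersections of $F'$ with the facets of $R$. If a fully self-contained write-up is desired, one could replace the geometric argument in (ii) by a factorization-theorem argument: the slack matrix $S(F)$ is a column-submatrix of a slack matrix of $Q$ obtained by restricting to vertices of $F$, so by Theorem~\ref{thm:factorization} and the monotonicity of nonnegative rank under taking submatrices, $\xc(F) = \nnegrk(S(F)) \leqslant \nnegrk(S(Q)) = \xc(Q)$.
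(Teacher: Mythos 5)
Your proof of part (i) is exactly the paper's (compose the two projections). For part (ii) you give a genuinely different primary argument from the paper's. The paper argues via slack matrices: a slack matrix of $F$ is obtained from the facet-vs-vertex slack matrix of $Q$ by deleting the columns corresponding to vertices of $Q$ not lying in $F$, and then it invokes Theorem~\ref{thm:factorization} together with monotonicity of nonnegative rank under taking submatrices. Your main argument is instead geometric and works at the level of extensions: pull the face $F$ of $Q$ back through the extension map $\pi$ to get a face $F' = R \cap \pi^{-1}(F)$ of the extension $R$, verify $\pi(F') = F$, and then bound the facet count of $F'$ by that of $R$. This is correct and arguably more elementary in that it avoids the factorization theorem entirely; the paper's route is shorter given that the factorization machinery is already in place. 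One small imprecision: you assert that the facets of $F'$ are \emph{exactly} the nonempty proper intersections $F' \cap H$ over facets $H$ of $R$, but in general such an intersection may be a lower-dimensional face of $F'$; what is true (and all you need) is that every facet of $F'$ arises as some $F' \cap H$, which gives a surjection from a subset of the facets of $R$ onto the facets of $F'$, hence the desired inequality on facet counts. Your closing alternative is essentially the paper's proof, though one should say ``a slack matrix of $F$'' rather than $S(F)$: the submatrix one gets still has rows indexed by facets of $Q$, not of $F$, so it is a slack matrix of $F$ w.r.t.\ a (possibly redundant) inequality description, which is admissible by Lemma~\ref{lem:non_redundant} and the remark following Theorem~\ref{thm:factorization}.
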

\begin{proof}
The first part is obvious because every extension of $F$ is in particular an extension of $P$. For the second part, notice that a slack matrix of $F$ can be obtained from the (facet-vs-vertex) slack matrix of $Q$ by deleting columns corresponding to vertices not in $F$. Now apply Theorem \ref{thm:factorization}.
\end{proof}

Using previous results, we can prove the following result about the worst-case extension complexity of the stable set polytope.

\begin{thm}
\label{thm:LB_STAB}
For all $n$, one can construct a graph $G_n$ with $n$ vertices such that the extension complexity of the stable set polytope $\STAB(G_n)$ is $2^{\Omega(\sqrt{n})}$.
\end{thm}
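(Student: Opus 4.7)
The plan is to combine the three ingredients already available: the construction of \(H_n\) from Lemma~\ref{lem:stab_embedding}, the monotonicity of extension complexity under taking faces and extensions (Lemma~\ref{lem:monotone}), and the exponential lower bound on \(\xc(\COR(n))\) from Theorem~\ref{thm:LB_CUT}. The only work is to choose the right parameter and to handle the mismatch between the vertex count of \(H_m\) and the target \(n\).

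First, given \(n\), let \(m\) be the largest positive integer such that the graph \(H_m\) from Lemma~\ref{lem:stab_embedding} has at most \(n\) vertices, i.e., \(2m + 4\binom{m}{2} \leqslant n\). Solving for \(m\), we get \(m = \Omega(n^{1/2})\). Define \(G_n\) to be the graph obtained from \(H_m\) by adding \(n - (2m + 4\binom{m}{2})\) isolated vertices, so that \(G_n\) has exactly \(n\) vertices.

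Second, observe that \(\STAB(H_m)\) appears as a face of \(\STAB(G_n)\): indeed, setting the coordinates associated to the isolated vertices to \(0\) defines a face of \(\STAB(G_n)\) whose vertex set is in bijection with the stable sets of \(H_m\), and the projection onto the remaining coordinates is a linear isomorphism onto \(\STAB(H_m)\). By Lemma~\ref{lem:monotone}(ii) we conclude
\[
\xc(\STAB(G_n)) \geqslant \xc(\STAB(H_m)).
\]
Next, by Lemma~\ref{lem:stab_embedding}, \(\STAB(H_m)\) has a face \(F\) that is an extension of \(\COR(m)\). Applying Lemma~\ref{lem:monotone}(ii) again yields \(\xc(\STAB(H_m)) \geqslant \xc(F)\), and applying Lemma~\ref{lem:monotone}(i) to the extension \(F \to \COR(m)\) gives \(\xc(F) \geqslant \xc(\COR(m))\).

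Finally, chaining the inequalities and invoking Theorem~\ref{thm:LB_CUT},
\[
\xc(\STAB(G_n)) \;\geqslant\; \xc(\COR(m)) \;\geqslant\; 2^{Cm} \;=\; 2^{\Omega(n^{1/2})},
\]
which is the desired bound. The main (and essentially only) subtlety is the padding step, justified by the fact that adding isolated vertices only exhibits \(\STAB(H_m)\) as a face of the larger stable set polytope and so can only increase the extension complexity; all the heavy lifting is already contained in the three results cited.
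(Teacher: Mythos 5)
Your proof is correct and follows essentially the same route as the paper: pick the largest $m$ with $|V(H_m)| \leqslant n$, pad with isolated vertices so that $\STAB(H_m)$ becomes a face of $\STAB(G_n)$, and chain Lemma~\ref{lem:monotone} with Lemma~\ref{lem:stab_embedding} and Theorem~\ref{thm:LB_CUT} to get $\xc(\STAB(G_n)) \geqslant \xc(\COR(m)) \geqslant 2^{\Omega(n^{1/2})}$. The only cosmetic difference is that the paper restricts to $n \geqslant 18$ to guarantee $m \geqslant 3$ exists, a detail absorbed by the $\Omega$-notation in any case.
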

\begin{proof}
W.l.o.g., we may assume $n \geqslant 18$. For an integer $p \geqslant 3$, let $f(p) := |V(H_{p})| = 2p + 4 {p \choose 2}$. Given $n \geqslant 18$, we define $p$ as the largest integer with  $f(p) \leqslant n$. Now let $G_n$ be obtained from $H_{p}$ by adding $n - f(p)$ isolated vertices. Then $\STAB(H_{p})$ is linearly isomorphic to a face of $\STAB(G_n)$. Using Theorem \ref{thm:LB_CUT} in combination with Lemmas \ref{lem:stab_embedding} and \ref{lem:monotone}, we find that
\begin{eqnarray*}
\xc(\STAB(G_n)) 
&\geqslant &\xc(\STAB(H_{p}))\\ 
&\geqslant &\xc(\COR(p))\\
&=& 2^{\Omega(p)}\\
&=& 2^{\Omega(\sqrt{n})}.
\end{eqnarray*}
\hfill \ 
\end{proof}

\subsection{TSP Polytopes} \label{sec:TSP_polytopes}

Recall that $\TSP(n)$, the \emph{traveling salesman polytope} or \emph{TSP polytope} of $K_n = (V_n,E_n)$, is defined as the convex hull of the characteristic vectors of all subsets $F \subseteq E_n$ that define a tour of $K_n$. That is,
\[
\TSP(n) := \conv \{\chi^F \in \mathbb{R}^{E_n} \mid F \subseteq E_n \text{ is a tour of } K_n\}.
\]

We now prove that the polytope $\COR(n)$ is the linear projection of a face of $\TSP(O(n^2))$, implying the following:

\begin{lem} \label{lem:tsp_embedding}
For each $n$, there exists a positive integer $q = O(n^2)$ such that $\TSP(q)$ contains a face that is an extension of $\COR(n)$.
\end{lem}

\begin{proof}
Recall that 
\[
\COR(n) = \conv \{bb^{\intercal} \in \mathbb{R}^{n \times n} \mid b \in \{0,1\}^n\}.
\]
To prove the lemma we start with constructing a graph $G_n$ with $q = O(n^2)$ vertices such that the tours of $G_n$ correspond to the $n \times n$ rank-$1$ binary symmetric matrices $bb^\intercal$, where $b \in \{0,1\}^n$. This is done in three steps:

\begin{enumerate}[(i)]
\item define a 3SAT formula $\phi_n$ with $n^2$ variables such that the satisfying assignments of $\phi_n$ bijectively correspond to the matrices $bb^\intercal$, where $b \in \{0,1\}^n$;
\item construct a directed graph $D_n$ with $O(n^2)$ vertices such that each directed tour of $D_n$ defines a satisfying assignment of $\phi_n$, and conversely each satisfying assignment of $\phi_n$ has at least one corresponding directed tour in $D_n$;
\item modify the directed graph $D_n$ into an undirected graph $G_n$ in such a way that the tours of $G_n$ bijectively correspond to the directed tours of $D_n$.
\end{enumerate}

\emph{Step (i)}. For defining $\phi_n$ we use Boolean variables $C_{ij} \in \{0,1\}$ for $i, j \in [n]$ and let
$$
\phi_n := \bigwedge_{i,j \in [n] \atop i \neq j} \left[
(C_{ii} \lor C_{jj} \lor \overline{C_{ij}}) \land  (C_{ii}\lor \overline{C_{jj}} \lor \overline{C_{ij}}) \land (\overline{C_{ii}} \lor C_{jj} \lor \overline{C_{ij}}) \land (\overline{C_{ii}} \lor \overline{C_{jj}} \lor C_{ij}) \right].
$$
The four clauses $(C_{ii} \lor C_{jj} \lor \overline{C_{ij}})$, $(C_{ii}\lor \overline{C_{jj}} \lor \overline{C_{ij}})$, $(\overline{C_{ii}} \lor C_{jj} \lor \overline{C_{ij}})$ and $(\overline{C_{ii}} \lor \overline{C_{jj}} \lor C_{ij})$ model the equation $C_{ij} = C_{ii} \land C_{jj}$. Hence, $C \in \{0,1\}^{n \times n}$ satisfies $\phi_n$ if and only if there exists $b \in \{0,1\}^n$ such that $C_{ij} = b_i \land b_j$ for all $i, j \in [n]$, or in matrix language, $C = bb^\intercal$.

\emph{Step (ii)}. To construct a directed graph $D_n$ whose directed tours correspond to the satisfying assignments of $\phi_n$ we use the standard reduction from 3SAT to HAMPATH \cite{SipserBook}.

We order the variables of $\phi_n$ arbitrarily and construct a gadget for each variable as follows. Suppose that the $k$th variable occurs in $p$ clauses. We create a chain of $3p+1$ nodes, labeled $v_{k,1}$,\ldots, $v_{k,3p+1}$, where each node $v_{k,\ell}$ with $\ell < 3p+1$ is connected to the next node $v_{k,\ell+1}$ with two opposite directed edges. Figure \ref{fig:variable_gadget} illustrates this. Traversing this chain from left to right is interpreted as setting the $k$th variable to false and traversing it from right to left is interpreted as setting the $k$th variable to true.  We also have two nodes $s_k, t_k$ connected to this chain with directed edges $(s_k, v_{k,1})$, $(s_k,v_{k,3p+1})$, $(v_{k,1},t_k)$ and $(v_{k,3p+1},t_k)$ creating a diamond structure.

\begin{figure}[!htb]
  \centering
  \includegraphics[width=0.45\textwidth]{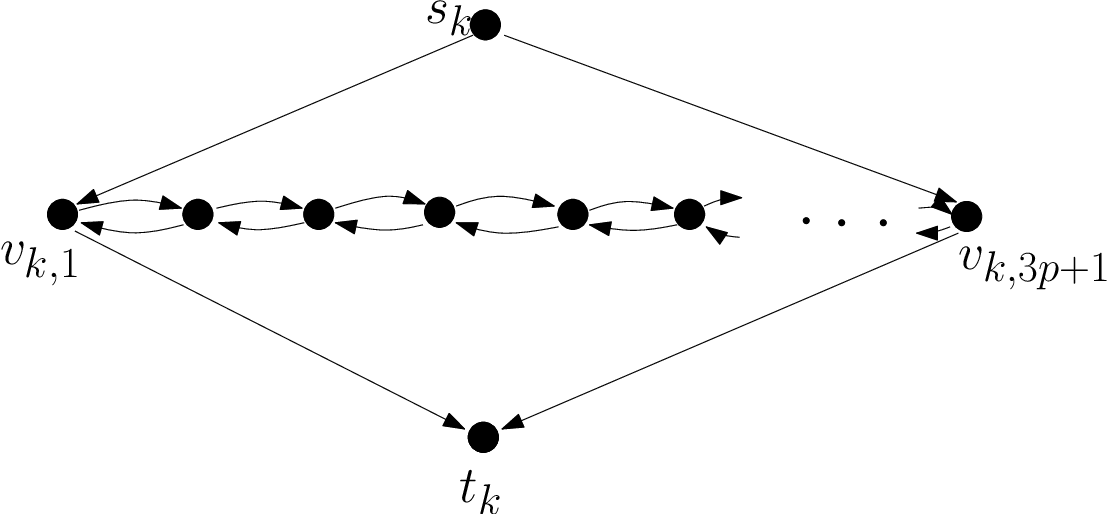}
  \caption{Gadget for the $k$th variable occurring in $p$ clauses.}
  \label{fig:variable_gadget}
\end{figure}

Next, we order the clauses of $\phi_n$ arbitrarily and create a node for each clause. The node for the $m$th clause is denoted by $w_m$. We connect these extra nodes to the gadgets for the variables as follows. Suppose, as before, that the $k$th variable appears in $p$ clauses. Consider the $\ell$th of these clauses in which the $k$th variable appears, and let $m$ be the index of that clause. If the $k$th variable appears negated in the $m$th clause then we add the path $v_{k,3\ell-1}, w_m, v_{k,3\ell}$. Otherwise the $k$th variable appears non-negated in the $m$th clause and we add the path $v_{k,3\ell}, w_m, v_{k,3\ell-1}$. Figure \ref{fig:clause_gadget} illustrates this.

\begin{figure}[!htb]
  \centering
  \includegraphics[width=0.45\textwidth]{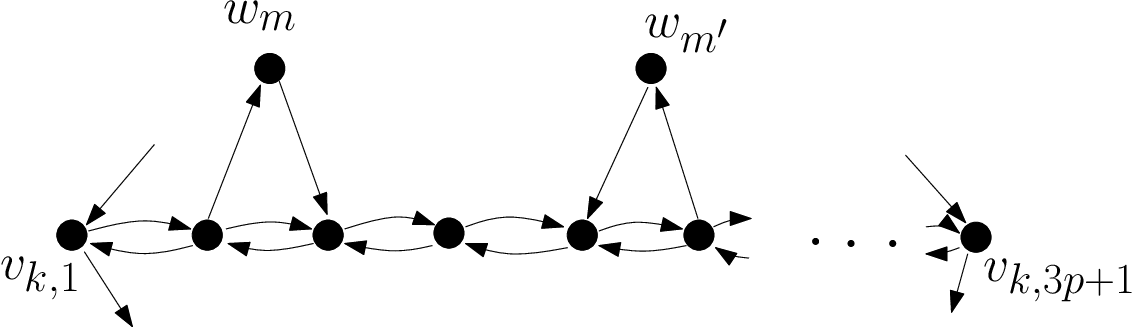}
  \caption{Gadgets for clauses in case the $k$th variable appears negated in the $m$th clause and non-negated in the $m'$th clause.}
  \label{fig:clause_gadget}
\end{figure}

Next we connect the gadgets corresponding to the variables by identifying $t_k$ with $s_{k+1}$ for $1 \leqslant k < n^2$. Finally, we add a directed edge from $t_{n^2}$ to $s_1$. Figure \ref{fig:final_gadget} illustrates the final directed graph obtained.

\begin{figure}[!htb]
  \centering
  \includegraphics[width=0.45\textwidth]{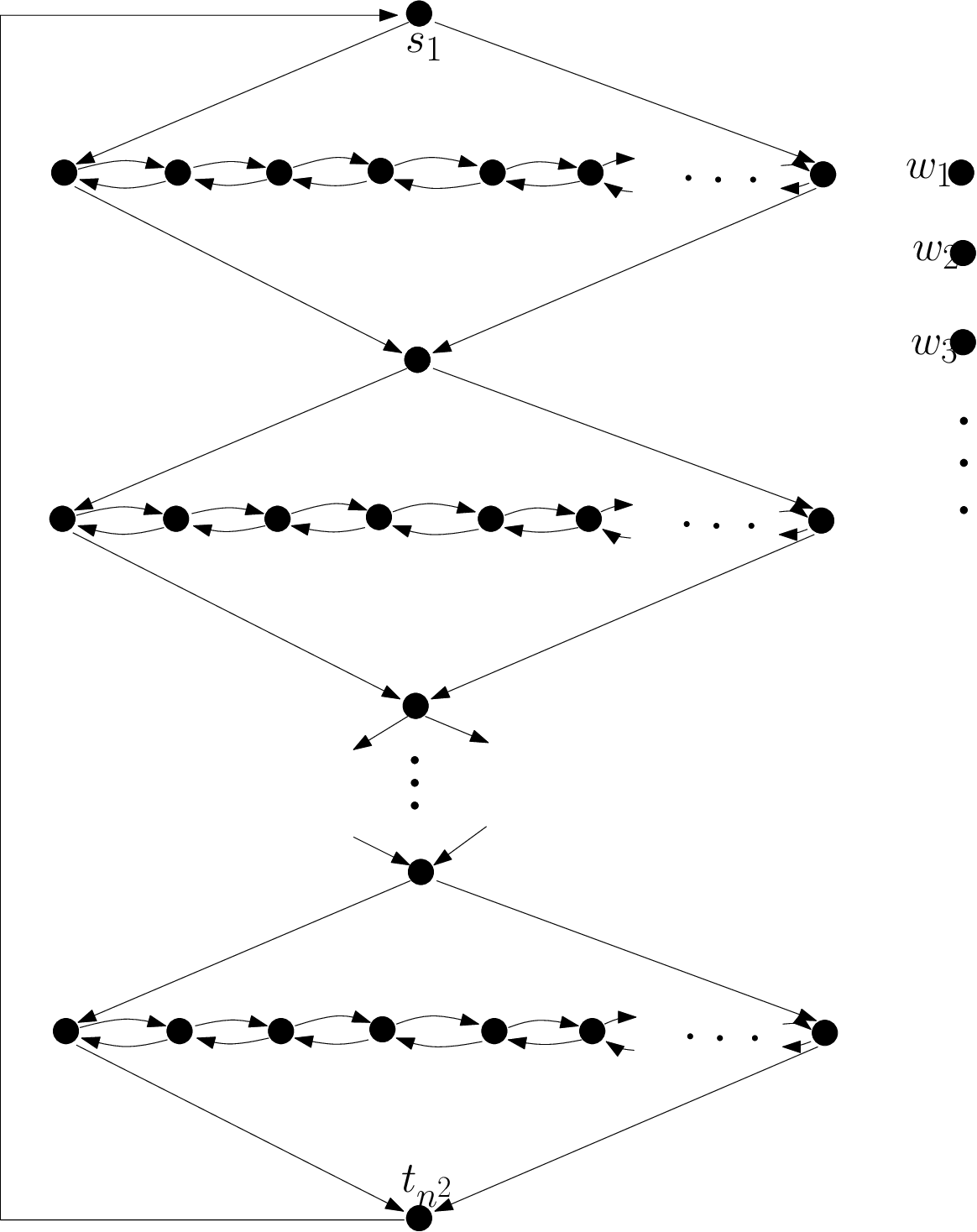}
  \caption{Final graph. Directed edges incident to nodes $w_m$ depend on the actual ordering of variables and clauses in Boolean formula $\phi_n$.}
  \label{fig:final_gadget}
\end{figure}

To see why the directed tours of the final directed graph $D_n$ define satisfying assignments of our Boolean formula $\phi_n$, observe that each directed tour of $D_n$ encodes a truth assignment to the $n^2$ variables depending on which way the corresponding chains are traversed. Because a directed tour visits every node and because the node $w_m$ corresponding to a clause can be visited only if we satisfy it, the truth assignment satisfies $\phi_n$. Conversely, every satisfying assignment of $\phi_n$ yields at least one directed tour in $D_n$. (If the $m$th clause is satisfied by the value of more than one variable, we visit $w_m$ only once, from the chain of the first variable whose value makes the clause satisfied).
  
\emph{Step (iii)}. For each node $v$ of $D_n$ we create a path $v_\mathrm{in}, v_\mathrm{mid}, v_\mathrm{out}$ in the (undirected) graph $G_n$. For each directed edge $(v,w)$ of $D_n$, we add to the graph $G_n$ an edge between $v_\mathrm{out}$ and $w_\mathrm{in}$. As is easily seen, the tours of $G_n$ bijectively correspond to the directed tours of $D_n$. Note that $G_n$ has $q := 3(n(n-1) \cdot 13 + n \cdot (3n-2) + n^2 + 4n(n-1)) = O(n^2)$ vertices.

Consider the face $F$ of $\TSP(q)$ defined by setting to $0$ all variables $x_{e}$ corresponding to non-edges of $G_n$, so that the vertices of $F$ are the characteristic vectors of the tours of $G_n$. To conclude the proof we give a linear projection $\pi : x \mapsto y := \pi(x)$ mapping $F$ to $\COR(n)$. For $x \in \RR^{E_q}$ and $i,j \in [n]$, we let $y_{ij} = x_e$, where $e$ is the edge \((v_{out,k,2}, v_{in,k,1})\) of $G_n$ corresponding to the directed edge $(v_{k,2},v_{k,1})$ and $k$ is the index of the variable $C_{ij}$ of $\phi_n$. It follows from the above discussion that $\pi$ maps the face $F$ of $\TSP(q)$ to $\COR(n)$. The lemma follows.
\end{proof}

The final theorem in this section follows from Theorem \ref{thm:LB_CUT}, Lemmas \ref{lem:monotone} and \ref{lem:tsp_embedding}, using an argument similar to that used in the proof of Theorem~\ref{thm:LB_STAB}.

\begin{thm}
\label{thm:LB_TSP}
The extension complexity of the TSP polytope $\TSP(n)$ is $2^{\Omega(n^{1/2})}$.
\end{thm}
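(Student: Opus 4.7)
The plan is to mimic the proof of Theorem~\ref{thm:LB_STAB}, using Lemma~\ref{lem:tsp_embedding} in place of Lemma~\ref{lem:stab_embedding}. Given $n$, let $q(p) = O(p^2)$ denote the number of vertices of the graph $G_p$ produced by Lemma~\ref{lem:tsp_embedding}, and let $p$ be the largest integer with $q(p) \leqslant n$. Since $q(p) = \Theta(p^2)$, this choice gives $p = \Omega(n^{1/2})$. By Lemma~\ref{lem:tsp_embedding} there is a face $F$ of $\TSP(q(p))$ which is an extension of $\COR(p)$, and by Theorem~\ref{thm:LB_CUT} we know $\xc(\COR(p)) \geqslant 2^{Cp}$ for some constant $C>0$.

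The step that requires actual work is relating $\TSP(q(p))$ back to $\TSP(n)$: I need to argue that $\TSP(m)$ is linearly isomorphic to a face of $\TSP(n)$ whenever $m \leqslant n$. The standard trick is to pick a distinguished edge $uv$ of $K_m$, insert the $n-m$ extra vertices $w_1, \ldots, w_{n-m}$ one after another on that edge, and consider the face of $\TSP(n)$ obtained by fixing $x_{uw_1} = x_{w_1 w_2} = \cdots = x_{w_{n-m-1} w_{n-m}} = x_{w_{n-m} v} = 1$ and $x_e = 0$ for every other edge of $K_n$ incident to some $w_i$. Any tour of $K_n$ satisfying these equations traverses the segment $u, w_1, \ldots, w_{n-m}, v$ as a contiguous block, and contracting that block yields a bijection between the vertices of this face and the vertices of $\TSP(m)$ which is plainly affine.

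Assuming that embedding, the bound then follows by applying Lemma~\ref{lem:monotone} twice: first, $\TSP(q(p))$ is (isomorphic to) a face of $\TSP(n)$, so $\xc(\TSP(n)) \geqslant \xc(\TSP(q(p)))$; second, $F$ is a face of $\TSP(q(p))$ which is an extension of $\COR(p)$, so $\xc(\TSP(q(p))) \geqslant \xc(F) \geqslant \xc(\COR(p))$. Chaining these,
\[
\xc(\TSP(n)) \;\geqslant\; \xc(\COR(p)) \;\geqslant\; 2^{Cp} \;=\; 2^{\Omega(n^{1/2})},
\]
as required.

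The main obstacle is the face-embedding $\TSP(m) \hookrightarrow \TSP(n)$: one must verify that fixing the coordinates as above actually cuts out a face whose vertices are exactly the characteristic vectors of tours using the fixed block, and that the forget-the-block map is an affine isomorphism onto $\TSP(m)$. Alternatively, and perhaps more cleanly, one can bypass this lemma by modifying the construction in Lemma~\ref{lem:tsp_embedding} to produce, for every $n$, a graph on \emph{exactly} $n$ vertices (by padding the gadget with dummy degree-$2$ vertices forced into every tour) whose tour polytope is the desired face of $\TSP(n)$; this avoids the embedding step entirely and matches the padding trick used in the proof of Theorem~\ref{thm:LB_STAB}.
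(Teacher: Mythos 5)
Your overall plan matches the paper's own (terse) argument: choose the largest $p$ with $q(p) \leqslant n$, note $p = \Omega(n^{1/2})$, invoke Lemma~\ref{lem:tsp_embedding} and Theorem~\ref{thm:LB_CUT}, and relate $\TSP(q(p))$ to $\TSP(n)$ by padding, just as the proof of Theorem~\ref{thm:LB_STAB} pads $H_p$ with isolated vertices.

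However, your primary padding argument has a genuine gap. Fixing $x_{uw_1} = \cdots = x_{w_{n-m}v} = 1$ and the remaining $w$-incident coordinates to $0$ does cut out a face of $\TSP(n)$, but contracting the forced block does \emph{not} put its vertices in bijection with all of $\vertexset(\TSP(m))$ --- only with the tours of $K_m$ that use the edge $uv$, i.e.\ with the proper face $\{x \in \TSP(m) : x_{uv} = 1\}$. (Concretely, with $m=4$, $n=5$, $uv = 12$, the face of $\TSP(5)$ you describe has two vertices, whereas $\TSP(4)$ has three.) Hence you do not get $\xc(\TSP(n)) \geqslant \xc(\TSP(m))$; Lemma~\ref{lem:monotone}(ii) runs in the wrong direction to rescue this. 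Two repairs are available. One is to choose $uv$ to be an edge belonging to every tour of the gadget graph from Lemma~\ref{lem:tsp_embedding} (e.g.\ an edge incident to a degree-$2$ vertex $v_\mathrm{mid}$); then the face $F$ of $\TSP(q(p))$ that projects onto $\COR(p)$ satisfies $F \subseteq \{x_{uv}=1\}$, and the chain $\xc(\TSP(n)) \geqslant \xc(\{x_{uv}=1\}) \geqslant \xc(F) \geqslant \xc(\COR(p))$ goes through without ever needing $\xc(\TSP(m))$ itself. The other --- which you yourself suggest and which is the cleaner route, mirroring the isolated-vertex padding in the proof of Theorem~\ref{thm:LB_STAB} --- is to subdivide a forced edge of the gadget graph with degree-$2$ dummy vertices before applying Lemma~\ref{lem:tsp_embedding}, so that the gadget already has exactly $n$ vertices. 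Either fix yields the stated $2^{\Omega(n^{1/2})}$ bound.
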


Throughout this paper we use $M_i$ and $M^j$ to denote, respectively, the $i$-th row and $j$-th column of $M.$ For convenience we define \([n]:= \{1, \dots,n\}\) for \(n \in \mathbb N\).

\begin{eqnarray*}
T_i U^j &= &  \left(\sum_{k=1}^f \mu_{i,k} T_k\right) \left(\sum_{\ell=1}^v \lambda_{j,\ell} U^\ell\right)\\
        &= & \sum_{k=1}^f \sum_{\ell=1}^v \mu_{i,k} \lambda_{j,\ell} T_k U^\ell\\
        &= & \sum_{k=1}^f \sum_{\ell=1}^v \mu_{i,k} \lambda_{j,\ell} (b_k - A_k v_\ell)\\
        &= & \sum_{k=1}^f \mu_{i,k} b_k - 
             \left(\sum_{k=1}^f \mu_{i,k} A_k\right) \left(\sum_{\ell=1}^v \lambda_{j,\ell} v_\ell\right)\\
        &= & b_i - A_i v_j\\
        &= & S_{ij}.
\end{eqnarray*}

\section{Quantum Communication and PSD Factorizations}\label{sec:qcp}

In this section we explain the connection with quantum communication.  This yields results that are interesting in their own right, and also clarifies where the matrix $M$ of Section~\ref{sec:ourmatrix} came from.

For a general introduction to quantum computation we refer to \cite{NielsenChuangBook} and to \cite{mermin2007quantum}, and 
for quantum communication complexity we refer to \cite{Wolf02} and to \cite{BCMW10}.
For our purposes, an $r$-dimensional \emph{quantum state} $\rho$ is an $r \times r$ PSD matrix of trace~1.\footnote{For simplicity we restrict to real rather than complex entries, which does not significantly affect the results.}
A \emph{$k$-qubit state} is a state in dimension $r=2^k$.
If $\rho$ has rank~\(1\), it can be written as an outer product $\ketbra{\phi}{\phi}$ of some unit column vector $\ket{\phi}$ and its conjugate transpose $\bra{\phi}$ (which is a row vector). This $\ket{\phi}$ is sometimes called a \emph{pure state}. 
We use $\ket{i}$ to denote the pure state vector that has~\(1\) at position~$i$ and \(0\)s elsewhere.
A quantum measurement (POVM) is described by a set of PSD matrices $\{E_\theta\}_{\theta\in\Theta}$, 
each labeled by a real number $\theta$, and summing to the $r$-dimensional identity: $\sum_{\theta\in\Theta} E_\theta=I$.
When measuring state $\rho$ with this measurement, the probability of outcome $\theta$ equals $\tr{E_\theta\rho}$.
Note that if we define the PSD matrix $E:=\sum_{\theta\in\Theta} \theta E_\theta$, then the \emph{expected value}
of the measurement outcome is $\sum_{\theta\in\Theta}\theta\tr{E_\theta\rho}=\tr{E\rho}$.

\subsection{PSD Factorizations}

Analogous to nonnegative factorizations and nonnegative rank, one can define PSD factorizations and PSD rank. A \emph{rank-$r$ PSD factorization} of an $m \times n$ matrix $M$ is a collection of $r \times r$ symmetric positive semidefinite matrices $T_1, \ldots, T_m$ and $U^1, \ldots, U^n$ such that the Frobenius product $\langle T_i, U^j \rangle = \tr{(T_i)^{\intercal}U^j} = \tr{T_i U^j}$ equals $M_{ij}$ for all $i \in [m], j \in [n]$. The \emph{PSD rank} of $M$ is the minimum $r$ such that $M$ has a rank-$r$ PSD factorization. We denote this by $\psdrk(M)$.

Below, we show that $\psdrk(M)$ can be expressed in terms of the amount of communication needed by a one-way quantum communication protocol for computing $M$ in expectation (Corollary~\ref{cor:fullEquiv}). Before doing so, we state the geometric interpretation of $\psdrk(M)$ when $M$ is a slack matrix.

For a positive integer $r$, we let $\mathbb{S}^r_+$ denote the cone of $r \times r$ symmetric positive semidefinite matrices embedded in $\RR^{r(r+1)/2}$ in such a way that, for all $y, z \in \mathbb{S}^r_+$, the scalar product $z^{\intercal}y$ is the Frobenius product of the corresponding matrices. A \emph{semidefinite EF of size $r$} is a conic EF w.r.t.\ $C = \mathbb{S}^r_+$, that is, a system $Ef + Fy = g$, $y \in \mathbb{S}^r_+$ such that $P = \{x \in \mathbb{R}^d \mid \exists y : Ef + Fy = g$, $y \in \mathbb{S}^r_+\}$. We call the set $Q=\{(x,y)\in\RR^{d+r(r+1)/2}\mid Ex+Fy=g,\ y \in \mathbb{S}^r_+\}$ a \emph{semidefinite extension of $P$}. The \emph{semidefinite extension complexity} of polytope $P$, denoted by $\xcp(P)$, is the minimum $r$ such that $P$ has a semidefinite EF of size $r$. Observe that $(\mathbb{S}^r_+)^* = \mathbb{S}^r_+$. 

The following result follows from \cite{GouveiaParriloThomas2011}:

\begin{thm}
\label{thm:factorToExt}
Let $P=\{x\in\RR^d\mid Ax\leqslant b\} = \conv (V)$ be a polytope of dimension at least~$1$. Then the slack matrix $S$ of $P$ w.r.t.\ $Ax \leqslant b$ and $V$ has a factorization $S = \lfactor\rfactor$ so that $(\lfactor_i)^{\intercal}, \rfactor^j\in \mbS^r_+$ if and only if there exists a semidefinite extension $Q=\{(x,y)\in\RR^{d+r(r+1)/2}\mid Ex+Fy=g,\ y \in \mathbb{S}^r_+\}$ with $P=\pi_x(Q)$. 
\end{thm}

\subsection{Quantum Protocols}

A \emph{one-way quantum protocol with $r$-dimensional messages} can be described as follows.
On input~$i$, Alice sends Bob an $r$-dimensional state $\rho_i$.
On input~$j$, Bob measures the state he receives with a POVM $\{E^j_\theta\}$ for some \emph{nonnegative} values $\theta$,
and outputs the result. We say that such a protocol \emph{computes a matrix $M$ in expectation}, 
if the expected value of the output on respective inputs $i$ and $j$, equals the matrix entry $M_{ij}$. 
Analogous to the equivalence between classical protocols and nonnegative factorizations of~$M$ 
established by Faenza et al. \cite{FaenzaFioriniGrappeTiwary11},
such quantum protocols are essentially equivalent to PSD factorizations of~$S$:

\begin{thm}
\label{thm:qProtImpliesFact}
Let $M \in \RR_+^{m\times n}$ be a matrix. Then the following holds:
\begin{enumerate}[(i)]
\item A one-way quantum protocol with $r$-dimensional messages that computes $M$ in expectation, gives a rank-$r$ PSD factorization of $M$.
\item A rank-$r$ PSD factorization of $M$ gives a one-way quantum protocol with $(r+1)$-dimensional messages that computes $M$ in expectation.
\end{enumerate}
\end{thm}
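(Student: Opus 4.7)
The plan is to translate the two definitions into one another essentially directly. For (i), the PSD matrices to be factored fall out of the protocol almost by inspection. For (ii), the only subtlety is that a rank-$r$ PSD factorization uses matrices whose trace and operator norm can be arbitrary, while a quantum state must have trace $1$ and each POVM element must be bounded above by the identity; I would absorb both normalizations using one extra dimension, which accounts for the jump from $r$ to $r+1$.

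For part (i), let $\rho_i$ denote Alice's $r$-dimensional message on input $i$, and let $\{E^j_\theta\}_\theta$ be Bob's POVM on input $j$, with outcomes $\theta \geqslant 0$. I would set $T_i := \rho_i$ and $U^j := \sum_\theta \theta\, E^j_\theta$. Each $T_i$ is PSD of trace $1$ by the definition of a quantum state, and each $U^j$ is PSD as a nonnegative combination of PSD matrices. By the expected-value formula already recorded in the preamble, the expected output on inputs $(i,j)$ equals $\operatorname{Tr}(U^j \rho_i) = \operatorname{Tr}(T_i U^j)$, and by assumption this is $M_{ij}$. Thus $(T_i, U^j)$ is a rank-$r$ PSD factorization of $M$.

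For part (ii), start from a rank-$r$ PSD factorization $T_1,\dots,T_m$, $U^1,\dots,U^n$. Let $\alpha := \max_i \operatorname{Tr}(T_i)$ and $\beta := \max_j \lambda_{\max}(U^j)$; both are nonnegative. I would define $(r+1)$-dimensional matrices by block-padding,
\[
\rho_i := \frac{1}{\alpha}\begin{pmatrix} T_i & 0 \\ 0 & \alpha - \operatorname{Tr}(T_i) \end{pmatrix},
\qquad
\tilde{E}^j := \frac{1}{\beta}\begin{pmatrix} U^j & 0 \\ 0 & 0 \end{pmatrix}.
\]
By construction $\rho_i$ is PSD of trace $1$, and $\tilde{E}^j$ is PSD with $\tilde{E}^j \preceq I_{r+1}$. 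On input $j$ Bob uses the two-outcome POVM $\{\tilde{E}^j,\ I_{r+1}-\tilde{E}^j\}$, outputting the nonnegative value $\alpha\beta$ on the first outcome and $0$ on the second. A short computation then gives expected output $\alpha\beta\cdot\operatorname{Tr}(\tilde{E}^j \rho_i) = \operatorname{Tr}(T_i U^j) = M_{ij}$.

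The only real obstacle is handling the trace and spectral-norm normalizations simultaneously. If one insisted on keeping the message dimension at $r$, the natural rescaling of $T_i$ by $\operatorname{Tr}(T_i)$ would depend on $i$, and since Bob does not see $i$ the corresponding factor could not be cleanly absorbed into his output. The extra coordinate plays the role of a ``slack'' that makes every $\rho_i$ have trace exactly $1$, while the symmetric zero-padding of $U^j$ keeps Bob's measurement oblivious to this slack. This is precisely why the construction uses $r+1$ rather than $r$ dimensions.
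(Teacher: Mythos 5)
Your proposal is correct and follows essentially the same route as the paper: part (i) reads off the factorization from the states and the averaged POVM element, and part (ii) pads with one extra dimension to fix the trace of $T_i/\tau$ and rescales $U^j$ by its largest eigenvalue so that it becomes a valid POVM element. The only cosmetic difference is that you normalize Bob's side by the global maximum $\beta=\max_j\lambda_{\max}(U^j)$ with a constant output value $\alpha\beta$, whereas the paper uses the per-input eigenvalue $\lambda_{\max}(U^j)$ and output $\tau\lambda$; since Bob knows $j$, both are equally valid.
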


\begin{proof}
The first part is straightforward.  Given a quantum protocol as above, define $E^j:=\sum_{\theta\in\Theta} \theta E^j_\theta$.
Clearly, on inputs $i$ and $j$ the expected value of the output is $\tr{\rho_i E^j}=M_{ij}$.

For the second part, suppose we are given a PSD factorization of a matrix $M$,
so we are given PSD matrices $T_1,\ldots,T_m$ and $U^1,\ldots,U^n$ satisfying $\tr{T_iU^j}=M_{ij}$ for all $i,j$.
In order to turn this into a quantum protocol, define $\tau=\max_i\tr{T_i}$.
Let $\rho_i$ be the $(r+1)$-dimensional quantum state obtained by adding a $(r+1)$st row and column to $T_i/\tau$,
with $1-\tr{T_i}/\tau$ as $(r+1)$st diagonal entry, and \(0\)s elsewhere.  
Note that $\rho_i$ is indeed a PSD matrix of trace~\(1\), so it is a well-defined quantum state.
For input $j$, derive Bob's $(r+1)$-dimensional POVM from the PSD matrix $U^j$ as follows.
Let $\lambda$ be the largest eigenvalue of $U^j$, and 
define $E^j_{\tau\lambda}$ to be $U^j/\lambda$, extended with a $(d+1)$st row and column of 0s. 
Let $E^j_0=I-E^j_{\tau\lambda}$. 
This is positive semidefinite because the largest eigenvalue of $E^j_{\tau\lambda}$ is~1. Hence the two operators $E^j_{\tau\lambda}$  and $E^j_0$ together form a well-defined POVM.
The expected outcome (on inputs $i,j$) of the protocol induced by the states and POVMs that we just defined,
is 
$$
\tau\lambda\tr{E^j_{\tau\lambda}\rho_i}=\tr{T_iU^j}=M_{ij},
$$ 
so the protocol indeed computes $M$ in expectation.
\end{proof}

We obtain the following corollary which summarizes the characterization of semidefinite EFs:

\begin{cor}
  \label{cor:fullEquiv}
For a polytope $P$ with slack matrix $S$, the following are equivalent:
\begin{enumerate}[(i)]
\item $P$ has a semidefinite extension $Q=\{(x,y)\in\RR^{d+r(r+1)/2}\mid Ex+Fy=g, y \in \mathbb{S}^r_+\}$;
\item the slack matrix $S$ has a rank-$r$ PSD factorization;
\item there exists a one-way quantum communication protocol with $(r+1)$-dimensional messages (i.e., using $\lceil \log (r+1) \rceil$ qubits) that computes \(S\) in expectation (for the converse we consider \(r\)-dimensional messages). 
\end{enumerate}
\end{cor}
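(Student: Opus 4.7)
The plan is to assemble this corollary by chaining together two equivalences that are already established in the preceding material, so the proof should be short and essentially mechanical.

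First, I would establish $(i) \Leftrightarrow (ii)$ by a direct invocation of Corollary~\ref{cor:factorToExt}. That corollary, stated for an arbitrary slack matrix $S$ of $P$ with respect to some defining system and vertex set, precisely says that a rank-$r$ PSD factorization of $S$ exists if and only if $P$ has a semidefinite extension of the form $Q = \{(x,y) \in \RR^{d+r(r+1)/2} \mid Ex + Fy = g,\ y \in \mathbb{S}^r_+\}$ with $P = \pi_x(Q)$. In addition, I would point to Lemma~\ref{lem:non_redundant} (specialized to $C = \mathbb{S}^r_+$, which is self-dual) to note that the existence of such a factorization does not depend on the particular slack matrix chosen, so we may speak of \emph{the} slack matrix $S$ of $P$ without ambiguity.

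Next, I would establish $(ii) \Leftrightarrow (iii)$ by invoking Theorem~\ref{thm:qProtImpliesFact}. Part~(i) of that theorem gives the direction (iii) $\Rightarrow$ (ii): from a one-way quantum protocol with $r$-dimensional messages computing $S$ in expectation, one reads off a rank-$r$ PSD factorization of $S$ by setting $T_i := \rho_i$ and $U^j := E^j = \sum_\theta \theta E^j_\theta$. Part~(ii) gives the converse direction (ii) $\Rightarrow$ (iii) at the cost of one extra dimension, producing a one-way quantum protocol with $(r+1)$-dimensional messages (i.e., using $\lceil \log(r+1) \rceil$ qubits) from a rank-$r$ PSD factorization. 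The mild asymmetry here is exactly what is recorded in the parenthetical remark ``for the converse we consider $r$-dimensional messages'' in the statement of the corollary, so I would flag this explicitly to make the correspondence between (ii) and (iii) precise.

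The only step that needs a moment's care is the dimension bookkeeping between (ii) and (iii): in one direction an $r$-dimensional message suffices, while in the other direction the construction in the proof of Theorem~\ref{thm:qProtImpliesFact} needs a padding coordinate to normalize $T_i/\tau$ into a trace-$1$ density matrix, producing a $(r+1)$-dimensional state. This is not an obstacle so much as a notational convention to state carefully, and there is no genuinely hard step beyond citing the two previous results.
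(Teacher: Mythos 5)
Your proposal is correct and matches exactly how the paper intends this corollary to be read: it is an immediate assembly of Corollary~\ref{cor:factorToExt} for the equivalence $(i)\Leftrightarrow(ii)$ and Theorem~\ref{thm:qProtImpliesFact} for $(ii)\Leftrightarrow(iii)$, and you correctly flag the $r$-vs-$(r+1)$ dimension asymmetry that the parenthetical in the statement is there to absorb.
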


\subsection{A General Upper Bound on Quantum Communication}

Now we provide a quantum protocol that efficiently computes a nonnegative matrix $M$ in expectation, whenever there is a low rank matrix $N$ whose entry-wise square is $M$.

\begin{thm}\label{th:qupperlowrank}
Let $M$ be a matrix with nonnegative real entries, $N$ be a rank-$r$ matrix of the same dimensions such that $M_{ij}=N^2_{ij}$. Then there exists a one-way quantum protocol using $(r+1)$-dimensional pure-state messages that computes $M$ in expectation.
\end{thm}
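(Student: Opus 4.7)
The plan is to exploit the rank-$r$ decomposition of $N$ to encode rows and columns as vectors in $\RR^r$, then package Alice's vector as a pure state on $r+1$ dimensions (the extra dimension absorbing normalization slack), and have Bob measure with a rank-one projector tied to his column vector.

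Concretely, since $N$ has rank at most $r$, write $N_{ij} = \langle u_i, v_j\rangle$ for suitable vectors $u_i, v_j \in \RR^r$ (indexed by the rows $i$ and columns $j$ of $M$, respectively). Let $\tau := \max_i \|u_i\|^2$, and let $\lambda_j := \|v_j\|^2$. On input $i$, Alice sends the $(r+1)$-dimensional pure state
\[
\ket{\phi_i} := \frac{1}{\sqrt{\tau}}\binom{u_i}{\,0\,} + \sqrt{1 - \tfrac{\|u_i\|^2}{\tau}}\,\ket{r+1},
\]
which is a unit vector by the definition of $\tau$. On input $j$ with $v_j \neq 0$, Bob applies the two-outcome POVM $\{E^j_0, E^j_{\tau\lambda_j}\}$, where $E^j_{\tau\lambda_j}$ is the $(r+1)\times(r+1)$ matrix obtained by embedding the rank-one projector $v_j v_j^{\intercal}/\lambda_j$ on the first $r$ coordinates (and zeros elsewhere), and $E^j_0 := I - E^j_{\tau\lambda_j}$. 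Since $v_j v_j^{\intercal}/\lambda_j$ is a genuine projector, both $E^j_{\tau\lambda_j}$ and $E^j_0$ are PSD and sum to the identity, so this is a valid POVM with nonnegative output labels. If $v_j = 0$ then $M_{ij} = 0$ for all $i$ and Bob just outputs $0$.

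The verification is then a one-line computation: the expected output on inputs $(i,j)$ is
\[
\tau\lambda_j \cdot \tr{E^j_{\tau\lambda_j}\,\ketbra{\phi_i}{\phi_i}} = \tau\lambda_j \cdot \frac{1}{\tau\lambda_j}\langle u_i, v_j\rangle^2 = N_{ij}^2 = M_{ij},
\]
so the protocol computes $M$ in expectation using $(r+1)$-dimensional pure-state messages, as claimed.

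I do not expect any serious obstacle here: the only subtleties are the two normalizations (bounding $\|u_i\|^2$ uniformly in $i$ via $\tau$ so that Alice's vector is a unit vector, and dividing by $\lambda_j$ so that Bob's POVM element is a projector), and handling the degenerate case $v_j = 0$ separately. The fact that $N$ is given as a \emph{real} factorization is what makes the inner product $\langle u_i, v_j\rangle^2$ match $M_{ij}$ directly after the squaring step inherent in the quantum measurement probability $|\langle\phi_i|v_j/\sqrt{\lambda_j}\rangle|^2$, which is where the ``entry-wise square'' hypothesis is genuinely used.
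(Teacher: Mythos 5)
Your proof is correct and takes essentially the same route as the paper: Alice sends a padded, normalized row vector of a rank-$r$ factorization of $N$ as an $(r+1)$-dimensional pure state, Bob performs a two-outcome measurement whose nontrivial element is a rank-one projector attached to his column, and the quadratic dependence of the measurement probability on the amplitudes produces $M_{ij}=N_{ij}^2$ in expectation. The only cosmetic difference is that the paper obtains the factorization from an SVD of $N^{\intercal}$, so Bob's vector is automatically a unit vector and his nonzero output label is the single constant $\Delta^2$, whereas you use a generic factorization $N_{ij}=\langle u_i,v_j\rangle$ and compensate with the $j$-dependent label $\tau\lambda_j$; both variants are equally valid since output labels may depend on $j$.
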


\begin{proof}
By Corollary~\ref{cor:fullEquiv}, it suffices to give a rank-$r$ PSD factorization of $M$.
To this end, let $t_i,u_j$ be $r$-dimensional real vectors such that $N_{ij}=t_i^\intercal u_j$; such vectors exist because~$N$ has rank~$r$.
Define $r\times r$ PSD matrices $T_i:=t_it_i^\intercal$ and $U^j:=u_ju_j^\intercal$. Then
$$
\tr{T_iU^j}=(t_i^\intercal u_j)^2=N^2_{ij}=M_{ij},
$$
hence we have a rank-$r$ PSD factorization of $M$.
%
\end{proof}

Note that if $M$ is a 0/1-matrix then we may take $N=M$, hence any low-rank 0/1-matrix can be computed in expectation by an efficient quantum protocol. 
If this~$M$ is the slack matrix for a polytope $P\subseteq\mathbb{R}^d$, then it is easy to see that its rank is at most $d+1$: the slack $M_{ij}=b_i-A_iv_j$ of a constraint $A_i x\leqslant b_i$ w.r.t.\ a point $v_j \in P$ can be written as the inner product between the two $(d+1)$-dimensional vectors $(b_i,-A_i)$ and~$(1,v_j)$. We thus obtain the following corollary (implicit in Theorem 4.2 of \cite{GouveiaParriloThomas2010}) which also implies a compact (i.e., polynomial size) semidefinite EF for the stable set polytope of perfect graphs, reproving the previously known result by Lov\'asz~\cite{Lovasz79,Lovasz03}. We point out that the result still holds when $\dim(P)+2$ is replaced by $\dim(P)+1$, see \cite{GouveiaParriloThomas2011}; this difference is due to normalization.

\begin{cor}
\label{cor:smallSDPExt}
Let \(P\) be a polytope such that \(S(P)\) is a 0/1 matrix. Then 
\(\xcp(P) \leqslant \dim(P) + 2.\) 
\end{cor}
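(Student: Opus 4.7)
The plan is to combine the upper bound of Theorem~\ref{th:qupperlowrank} with the observation that a 0/1 matrix is its own entry-wise square root, and then bound the ordinary rank of a slack matrix by $\dim(P)+1$.

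First, since $S = S(P)$ has entries in $\{0,1\}$, we have $S_{ij} = S_{ij}^2$, so we may apply Theorem~\ref{th:qupperlowrank} with $M := S$ and $N := S$. This yields a one-way quantum protocol with $(r+1)$-dimensional pure-state messages computing $S$ in expectation, where $r := \rk(S)$. By the first part of Theorem~\ref{thm:qProtImpliesFact}, this protocol induces a rank-$(r+1)$ PSD factorization of $S$, so $\psdrk(S) \leqslant r + 1$. Then the semidefinite factorization theorem (Corollary~\ref{cor:factorToExt}) gives $\xcp(P) = \psdrk(S) \leqslant r + 1 = \rk(S) + 1$.

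It remains to show $\rk(S) \leqslant \dim(P) + 1$. Write $P = \{x \mid Ax \leqslant b\} = \conv(V)$ with $V = [v_1 \mid \cdots \mid v_n]$. Then the slack matrix factors as
\[
S = b\,\mathbf{1}^{\intercal} - AV = \begin{pmatrix} b & -A \end{pmatrix} \begin{pmatrix} \mathbf{1}^{\intercal} \\ V \end{pmatrix}.
\]
The rank of the right-hand matrix is exactly $\dim(\operatorname{aff}(V)) + 1 = \dim(P) + 1$, because affine dependences among the $v_j$ correspond to linear dependences among the rows of $\bigl(\begin{smallmatrix} \mathbf{1}^{\intercal} \\ V \end{smallmatrix}\bigr)$. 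Combining the bounds yields $\xcp(P) \leqslant \dim(P) + 2$.

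I do not foresee a genuine obstacle: everything is essentially a one-line combination of Theorems~\ref{th:qupperlowrank} and~\ref{thm:qProtImpliesFact} with Corollary~\ref{cor:factorToExt}, plus the elementary rank bound for slack matrices. The only subtlety worth flagging is the ``$+1$'' gap (yielding $\dim(P)+2$ rather than $\dim(P)+1$), which comes from the auxiliary coordinate appended in the proof of Theorem~\ref{th:qupperlowrank} to normalize the pure states; the paper explicitly notes that a sharper normalization recovers $\dim(P)+1$, but this is not needed for the stated corollary.
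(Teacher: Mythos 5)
Your proof is correct and follows the paper's intended route: take $N = S(P)$ in Theorem~\ref{th:qupperlowrank} (valid since a 0/1 matrix is its own entrywise square), pass through Theorem~\ref{thm:qProtImpliesFact} and Corollary~\ref{cor:factorToExt} to get $\xcp(P)\leqslant \rk(S(P))+1$, and then use the standard factorization $S = (b \mid -A)\bigl(\begin{smallmatrix}\mathbf{1}^{\intercal}\\ V\end{smallmatrix}\bigr)$ to bound $\rk(S(P))\leqslant \dim(P)+1$. Your side remark correctly identifies the normalization step in Theorem~\ref{th:qupperlowrank} as the source of the extra $+1$ relative to the sharper $\dim(P)+1$ bound that the paper attributes to Gouveia, Parrilo and Thomas.
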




\subsection{Quantum vs Classical Communication, and PSD vs Nonnegative Factorizations}
\label{sec:separation}

We now give an example of an exponential separation between quantum and classical communication in expectation, based on the matrix $M$ of Section~\ref{sec:ourmatrix}. This result actually preceded and inspired the results in Section~\ref{sec:Strong_LBs}.

%
%
%

\begin{thm}
  \label{thm:expSep}
For each $n$, there exists a nonnegative matrix \(M \in \RR^{2^n\times 2^n}\) that can be computed in expectation by a quantum protocol using \(\log n + O(1)\) qubits, while any classical randomized protocol needs \(\Omega(n)\) bits to compute \(M\) in expectation.
\end{thm}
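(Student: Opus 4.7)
The plan is to use the very matrix $M = M(n)$ from Section~\ref{sec:ourmatrix}, defined by $M_{ab} = (1 - a^{\intercal}b)^2$. Both the quantum upper bound and the classical lower bound are essentially already in hand from results stated earlier; the theorem amounts to assembling them.

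For the quantum upper bound, I would exhibit a low-rank matrix $N$ whose entry-wise square equals $M$ and invoke Theorem~\ref{th:qupperlowrank}. The natural choice is $N_{ab} := 1 - a^{\intercal}b$. To see that $N$ has rank at most $n+1$, write $N_{ab} = \tilde{a}^{\intercal}\tilde{b}$ with $\tilde{a} := (1,-a_1,\ldots,-a_n)^{\intercal}$ and $\tilde{b} := (1,b_1,\ldots,b_n)^{\intercal}$ in $\RR^{n+1}$. Clearly $M_{ab} = N_{ab}^2$ and $N$ has rank at most $n+1$, so Theorem~\ref{th:qupperlowrank} yields a one-way quantum protocol with $(n+2)$-dimensional pure-state messages computing $M$ in expectation. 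This uses $\lceil \log(n+2) \rceil = \log n + O(1)$ qubits.

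For the classical lower bound, I would combine two results quoted in the paper. First, by Faenza et al.~\cite{FaenzaFioriniGrappeTiwary11} the minimum complexity of a randomized protocol with nonnegative outputs computing $M$ in expectation equals $\log \nnegrk(M)$ up to an additive constant. Second, by Theorem~\ref{thm:nnegrkvsndetcc}, $\nnegrk(M)$ is bounded below by the rectangle covering bound of $\suppmat(M)$, which by Theorem~\ref{thm:coverlowerboundforM} is $2^{\Omega(n)}$. Chaining these bounds gives that any classical randomized protocol computing $M$ in expectation uses $\Omega(n)$ bits.

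Since no step requires new technical work beyond the identification of the rank-$(n+1)$ square root $N$, there is no real obstacle — the proof is a matter of verifying the rank computation and citing the correct results. The only minor care needed is to ensure that the quantum model and the classical model in the statement match exactly those characterized by Corollary~\ref{cor:fullEquiv} and by Faenza et al.\ (i.e., one-way protocols with nonnegative-valued outputs), so that the exponential gap between $\psdrk(M) = O(n)$ and $\nnegrk(M) = 2^{\Omega(n)}$ translates directly into the claimed communication separation.
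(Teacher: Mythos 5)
Your proof is correct, and the quantum half is exactly the paper's: the same square root $N_{ab}=1-a^{\intercal}b$ of rank at most $n+1$ fed into Theorem~\ref{th:qupperlowrank} to get $(n+2)$-dimensional messages, i.e.\ $\log n+O(1)$ qubits. Where you diverge is the classical lower bound. The paper argues directly from the protocol: to each message it associates the rectangle of inputs $(a,b)$ for which Alice sends that message with positive probability and Bob then outputs a nonzero value with positive probability; since outputs are nonnegative, these rectangles cover exactly the support of $M$, so a $c$-bit protocol yields a cover of $\suppmat(M)$ by $2^c$ rectangles, and Theorem~\ref{thm:coverlowerboundforM} forces $c=\Omega(n)$. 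You instead invoke the Faenza et al.\ equivalence between expected-value protocol complexity (with nonnegative outputs) and $\log\nnegrk$, then chain Theorem~\ref{thm:nnegrkvsndetcc} and Theorem~\ref{thm:coverlowerboundforM}. Both routes bottom out at the same rectangle-covering bound; the paper's version is self-contained and more elementary (it only uses the easy ``protocol $\Rightarrow$ rectangle cover'' step, not the full characterization), while yours is shorter given the cited machinery but makes the result depend on an external theorem and on matching its model precisely. Your closing caveat is the right one: the statement's ``computes $M$ in expectation'' presupposes nonnegative-valued outputs, and the paper's own rectangle argument uses this just as your citation of Faenza et al.\ does, so neither route covers protocols allowed to output negative values.
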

%
\begin{proof}
Consider the matrix $N \in \RR^{2^n\times 2^n}$ whose rows and columns are indexed by $n$-bit strings $a$ and $b$, respectively, and whose entries are defined as $N_{ab}=1 - a^{\intercal}b$.  Define $M \in\RR_+^{2^n\times 2^n}$ by $M_{ab}=N_{ab}^2$.  
This $M$ is the matrix from Section~\ref{sec:ourmatrix}.
Note that $N$ has rank $r \leqslant n+1$ because it can be written as the sum of $n+1$ rank-\(1\) matrices. Hence Theorem~\ref{th:qupperlowrank} immediately implies a quantum protocol with $(n+2)$-dimensional messages that computes $M$ in expectation.

For the classical lower bound, note that a protocol that computes $M$ in expectation has positive probability of giving a nonzero output on input $a,b$ if and only if $M_{ab}>0$. With a message $m$ in this protocol we can associate a rectangle $R_m=A\times B$
where $A$ consists of all inputs $a$ for which Alice has positive probability of sending $m$, and $B$ consists of
all inputs $b$ for which Bob, when he receives message $m$, has positive probability of giving a nonzero output.
Together these rectangles will cover exactly the nonzero entries of $M$.
Accordingly, a $c$-bit protocol that computes $M$ in expectation 
induces a rectangle cover for the support matrix of $M$ of size $2^c$.
Theorem~\ref{thm:coverlowerboundforM} lower bounds the size of such a cover by $2^{\Omega(n)}$, hence $c=\Omega(n)$.
\end{proof}

Together with Theorem~\ref{thm:qProtImpliesFact} and the equivalence of randomized communication complexity (in expectation) and nonnegative rank established in \cite{FaenzaFioriniGrappeTiwary11}, we immediately obtain an exponential separation between nonnegative rank and PSD rank.

\begin{cor}
\label{cor:expSeparRank}
For each $n$, there exists \(M \in \RR^{2^n\times 2^n}_+\), with \(\nnegrk(M) = 2^{\Omega(n)}\) and \(\psdrk(M) = O(n)\).
\end{cor}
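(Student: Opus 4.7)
The plan is to take as $M$ the explicit matrix $M(n)$ from Section~\ref{sec:ourmatrix}, which is exactly the one used in Theorem~\ref{thm:expSep}, and simply read off both bounds from results already in hand.

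For the upper bound $\psdrk(M) = O(n)$, I would start from the quantum protocol produced in the proof of Theorem~\ref{thm:expSep}: on input $a$, Alice sends an $(n+2)$-dimensional pure state, built by applying Theorem~\ref{th:qupperlowrank} to the rank $\leq n+1$ matrix $N$ with $N_{ab} = 1 - a^{\intercal} b$ whose entry-wise square is $M$. Plugging this protocol, which uses $r$-dimensional messages with $r = n+2$, into Theorem~\ref{thm:qProtImpliesFact}(i) immediately produces a rank-$r$ PSD factorization of $M$, giving $\psdrk(M) \leq n+2 = O(n)$.

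For the lower bound $\nnegrk(M) = 2^{\Omega(n)}$, I would invoke the equivalence established in \cite{FaenzaFioriniGrappeTiwary11} between classical randomized protocols that compute a matrix in expectation and nonnegative factorizations of that matrix: up to an additive $O(1)$ term, $\log \nnegrk(M)$ equals the minimum complexity of such a protocol. Theorem~\ref{thm:expSep} already shows that any such classical protocol for our $M$ needs $\Omega(n)$ bits, so $\nnegrk(M) = 2^{\Omega(n)}$ follows at once. A slightly more direct route that avoids the Faenza et al.\ characterization altogether is to combine Theorem~\ref{thm:nnegrkvsndetcc} with Theorem~\ref{thm:coverlowerboundforM}: the nonnegative rank is at least the rectangle covering bound of $\suppmat(M)$, which is $2^{\Omega(n)}$ by the disjointness-based argument from \cite{Wolf03, Razborov92}.

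There is essentially no main obstacle: all the machinery---the specific matrix $M$, the efficient quantum protocol of Theorem~\ref{th:qupperlowrank}, the classical lower bound via rectangle covering, and both directions of the quantum/PSD correspondence in Theorem~\ref{thm:qProtImpliesFact}---has been developed earlier, so the corollary is a one-line deduction. The only minor thing to watch is the harmless constant shift in message dimension (e.g., $(r+1)$ versus $r$ in Theorem~\ref{thm:qProtImpliesFact}), which only affects the hidden constant in the $O(n)$ bound.
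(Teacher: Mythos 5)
Your proposal is correct and follows essentially the same route as the paper: the corollary is deduced from the matrix $M(n)$ of Theorem~\ref{thm:expSep}, using Theorem~\ref{thm:qProtImpliesFact} (via Theorem~\ref{th:qupperlowrank}) for $\psdrk(M)=O(n)$ and either the Faenza et al.\ equivalence or, as you note more directly, Theorems~\ref{thm:nnegrkvsndetcc} and~\ref{thm:coverlowerboundforM} for $\nnegrk(M)=2^{\Omega(n)}$. The only addition in the paper is a remark giving an explicit rank-$(n+1)$ PSD factorization, $T_a = \binom{1}{-a}\binom{1}{-a}^{\intercal}$ and $U^b = \binom{1}{b}\binom{1}{b}^{\intercal}$, which bypasses the quantum detour but does not change the argument.
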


In fact a simple rank-$(n+1)$ PSD factorization of $M$ is the following: let $T_a:= {1 \choose -a} {1 \choose -a}^{\intercal}$ and
$U^b := {1 \choose b} {1 \choose b}^{\intercal}$, then $\tr{T_a U^b}=(1-a^{\intercal} b)^2=M_{ab}$.

\section{Concluding Remarks} \label{sec:concluding-remarks}

In addition to proving the first unconditional super-polynomial lower bounds on the size of linear EFs for the cut polytope, stable set polytope and TSP polytope, we demonstrate that the rectangle covering bound can prove strong results in the context of EFs. In particular, it can be super-polynomial in the dimension and the logarithm of the number of vertices of the polytope, settling an open problem of \cite{FioriniKaibelPashkovichTheis11}. 

The exponential separation between nonnegative rank and PSD rank that we prove here (Theorem \ref{thm:expSep}) actually implies more than a super-polynomial lower bound on the extension complexity of the cut polytope. As noted in Theorem \ref{thm:DeSimone}, the polytopes $\CUT(n)$ and $\COR(n-1)$ are affinely isomorphic. Let $Q(n)$ denote the polyhedron isomorphic (under the same affine map) to the polyhedron defined by \eqref{eq:COR} for $a \in \{0,1\}^n$. Then (i) \emph{every} polytope (or polyhedron) that contains $\CUT(n)$ and is contained in $Q(n)$ has exponential extension complexity; (ii) there exists a low complexity spectrahedron that contains $\CUT(n)$ and is contained in $Q(n)$. (A \emph{spectrahedron} is any projection of an affine slice of the positive semidefinite cone.)
This was used in \cite{bfps2012} to establish the existence of a spectrahedron that cannot be well approximated by linear programs of polynomial size. 

An important problem also left open in~\cite{Yannakakis91} is whether the perfect matching polytope has a polynomial-size linear EF. Yannakakis proved that every \emph{symmetric} EF of this polytope has exponential size, a striking result given the fact that the perfect matching problem is solvable in polynomial time. He conjectured that asymmetry also does not help in the case of the perfect matching polytope. Because it is based on the rectangle covering bound, our argument does not yield a super-polynomial lower bound on the extension complexity of the perfect matching polytope. This question was recently answered in the affirmativeby Rothvo\ss \cite{R13} posted on arXiv a proof of the fact that the extension complexity of the perfect matching polytope is $2^{\Omega(n)}$. This groundbreaking result is based on a general lower bound called the \emph{hyperplane separation bound}, which was used implicitly, e.g., in~\cite{bfps2012}.

As mentioned at the end of the introduction, the new connections developed have already inspired much follow-up research in particular about \emph{approximate} EFs. Here are two concrete questions left open for future work: (i) find a \emph{slack matrix} that has an exponential gap between nonnegative rank and PSD rank; (ii) prove that the cut polytope has no polynomial-size \emph{semidefinite}~EF  (that would rule out SDP-based algorithms for optimizing over the cut polytope, in the same way that this paper ruled out LP-based algorithms).


Our final remark concerns the famous \emph{log-rank conjecture}~\cite{LovaszSaks93}. It states that the deterministic communication complexity of a (finite) Boolean matrix $M$ is upper bounded by a polynomial in the logarithm of its rank~$\rk(M)$.  On the one hand, this conjecture is equivalent to the following statement: $\log(\nnegrk(M)) \leqslant \mathrm{polylog}(\rk(M))$ for all Boolean matrices~$M$. On the other hand, we know that $\psdrk(M) = O(\rk(M))$ for all Boolean matrices $M$ by Theorem~\ref{th:qupperlowrank}. Using the interpretation of the nonnegative and PSD rank of~$M$ in terms of classical and quantum communication protocols computing $M$ in expectation (see \cite{FaenzaFioriniGrappeTiwary11} and Theorem~\ref{thm:qProtImpliesFact}), we see that the log-rank conjecture is \emph{equivalent} to the conjecture that classical protocols computing~$M$ in expectation are at most polynomially less efficient than quantum protocols.  Accordingly, one way to prove the log-rank conjecture would be to 
give an efficient classical simulation of such quantum protocols for Boolean~$M$ (for \emph{non-Boolean}~$M$, we already exhibited an exponential separation in this paper).

\subsection*{Acknowledgments}

We thank Kota Ishihara for carefully reading the manuscript and pointing out an error in a previous version of the text. We thank Monique Laurent for information about hypermetric inequalities, and the three anonymous STOC'12 referees as well as one JACM referee for suggesting improvements to the text. Sebastian Pokutta would like to thank Alexander Martin for the inspiring discussions and support. Ronald de Wolf thanks Giannicola Scarpa and Troy Lee for useful discussions.

Samuel Fiorini acknowledges support from the \emph{Actions de Recherche Concert\'ees} (ARC) fund of the French community of Belgium. Serge Massar acknowledges support from the European Commission under the projects QCS (Grant No.\ 255961) and QALGO (Grant No.\ 600700). Hans Raj Tiwary was postdoctoral researcher of the \emph{Fonds National de la Recherche Scientifique} (F.R.S.--FNRS). Ronald de~Wolf was partially supported by a Vidi grant from the Netherlands Organization for Scientific Research (NWO), by ERC Consolidator grant QPROGRESS, and by the European Commission under the projects QCS (Grant No.\ 255961) and QALGO (Grant No.\ 600700).

\bibliographystyle{plain}
\bibliography{bibliography}

\appendix

\section{Background on Polytopes} \label{apx:background}

A \emph{(convex) polytope} is a set $P \subseteq \RR^d$ that is the convex hull $\conv(V)$ of a finite set of points $V$. Equivalently, $P$ is a polytope if and only if $P$ is bounded and the intersection of a finite collection of closed halfspaces. This is equivalent to saying that $P$ is bounded and the set of solutions of a finite system of linear inequalities and possibly equalities (each of which can be represented by a pair of inequalities).

Let $P \subseteq \RR^d$ be a polytope. A closed halfspace $H^+$ that contains $P$ is said to be \emph{valid} for~$P$. In this case the hyperplane $H$ that bounds $H^+$  is also said to be \emph{valid} for $P$. A \emph{face} of~$P$ is either $P$ itself or the intersection of $P$ with a valid hyperplane. Every face of a polytope is again a polytope. A face is called \emph{proper} if it is not the polytope itself. A \emph{vertex} is a minimal nonempty face. A \emph{facet} is a maximal proper face. An inequality $c^{\intercal} x \leqslant \delta$ is said to be \emph{valid} for $P$ if it is satisfied by all points of $P$. The face it defines is $F := \{x \in P \mid c^{\intercal} x = \delta\}$. The inequality is called \emph{facet-defining} if $F$ is a facet. The \emph{dimension} of a polytope \(P\) is the dimension of its affine hull \(\text{aff}(P)\).

Every (finite or infinite) set $V$ such that $P = \conv(V)$ contains all the vertices of $P$. Conversely, letting $\vertexset(P)$ denote the vertex set of $P$, we have $P = \conv(\vertexset(P))$. Suppose now that $P$ is \emph{full-dimensional}, i.e., $\dim(P) = d$. Then, every (finite) system $Ax \leqslant b$ such that $P = \{x \in \RR^d \mid Ax \leqslant b\}$ contains all the facet-defining inequalities of $P$, up to scaling by positive numbers. Conversely, $P$ is described by its facet-defining inequalities. 

If $P$ is not full-dimensional, these statements have to be adapted as follows. Every (finite) system describing $P$ contains all the facet-defining inequalities of $P$, up to scaling by positive numbers and adding an inequality that is satisfied with equality by \emph{all} points of $P$. Conversely, a linear description of $P$ can be obtained by picking one inequality per facet and adding a system of equalities describing \(\text{aff}(P)\).

A \emph{$0/1$-polytope} in $\RR^d$ is simply the convex hull of a subset of $\{0,1\}^d$.

A (convex) \emph{polyhedron} is a set $P \subseteq \RR^d$ that is the intersection of a finite collection of closed halfspaces. A polyhedron $P$ is a polytope if and only if it is bounded.

For more background on polytopes and polyhedra, see the standard reference~\cite{Ziegler}.

\end{document}